\newcommand{\xmark}{\textcolor{red}{\text{\ding{55}}}}%
\newcommand{\pres}[3]{\textnormal{#1} \langle #2 \mid #3 \rangle}
\newcommand{\C}{\mathbb{C}}
\newcommand{\Z}{\mathbb{Z}}
\newcommand{\Q}{\mathbb{Q}}
\newcommand{\cH}{\mathcal{H}}
\newcommand{\bGamma}{\overline{\Gamma}}
\newcommand{\Mod}[1]{\ (\mathrm{mod}\ #1)}
\providecommand{\customgenericname}{}
\newtheorem{theorem}{Theorem} 
\numberwithin{theorem}{section}
\newtheorem*{theorem*}{Theorem} 
\newtheorem{lemma}[theorem]{Lemma}     
\newtheorem{corollary}[theorem]{Corollary}
\newtheorem{proposition}[theorem]{Proposition}
\newtheorem*{mainlemma*}{Main Lemma}
\newtheorem{conjecture}{Conjecture}
\newtheorem*{conjecture*}{Conjecture}
\theoremstyle{definition}
\newtheorem{question}{Question}
\newtheorem*{question*}{Question}
\newtheorem{example}{Example}
\numberwithin{example}{section}
\newtheorem{remark}{Remark}
\DeclareMathOperator{\SL}{SL}
\newcommand{\N}{{\mathbb{N}}}
\newcommand{\SLP}[1]{\SL_2(\mathbb{Z}[\frac{1}{#1}])}
\begin{document}

\title[Parabolic congruence subgroups of $\SLP{p}$]{On congruence subgroups of $\SLP{p}$ generated by two parabolic elements}

\author{Carl-Fredrik Nyberg-Brodda}
\address{School of Mathematics, Korea Institute for Advanced Study (KIAS), Seoul 02455, Republic of Korea}
\email{cfnb@kias.re.kr}
\thanks{The author is supported by Mid-Career Researcher Program (RS-2023-00278510) through the National Research Foundation funded by the government of Korea, and by the KIAS Individual Grant (MG094701) at Korea Institute for Advanced Study.}

\subjclass[2020]{30F35, 30F40; Secondary: 20E05, 11J70}

\date{\today}


\keywords{Free groups, Fuchsian groups, Kleinian groups, congruence subgroup.}

\begin{abstract}
We study the freeness problem for subgroups of $\SL_2(\C)$ generated by two parabolic matrices. For $q = r/p \in \Q \cap (0,4)$, where $p$ is prime and $\gcd(r,p)=1$, we initiate the study of the algebraic structure of the group $\Delta_q$ generated by the two matrices
\[
A = \begin{pmatrix}
1 & 0 \\ 1 & 1
\end{pmatrix}, \text{ and } Q_q = \begin{pmatrix}
1 & q \\ 0 & 1
\end{pmatrix}.
\]
We introduce the conjecture that $\Delta_{r/p} = \bGamma_1^{(p)}(r)$, the congruence subgroup of $\SLP{p}$ consisting of all matrices with upper right entry congruent to $0$ mod $r$ and diagonal entries congruent to $1$ mod $r$. We prove this conjecture when $r \leq 4$ and for some cases when $r = 5$. Furthermore, conditional on a strong form of Artin's conjecture on primitive roots, we also prove the conjecture when $r \in \{ p-1, p+1, (p+1)/2 \}$. In all these cases, this gives information about the algebraic structure of $\Delta_{r/p}$: it is isomorphic to the fundamental group of a finite graph of virtually free groups, and has finite index $J_2(r)$ in $\SLP{p}$, where $J_2(r)$ denotes the Jordan totient function. 
\end{abstract}

\maketitle

Consider the following general problem: given a set of matrices, what can be said about the (semi)group $\cH$ generated by them? Can one decide whether some given matrix lies in $\cH$? Can one decide whether $\cH$ is free, or even finitely presented? It should come as no surprise that this type of problem in full generality is undecidable. For example, Markov \cite{Markov1947} proved already in 1947 that the problem of whether a given matrix can be expressed as a product of some other fixed matrices, i.e.\ the membership problem, is undecidable for $\SL_4(\Z)$. This was one of the first undecidability results in mathematics, and demonstrates the inherent difficulty of problems of this type. Nevertheless, in special cases, one can often do much more. 

When studying the matrix group $\SL_n(\mathbf{R})$ for some ring $\mathbf{R}$, there are two degrees of freedom: the size of $n$, and the complexity of $\mathbf{R}$. Even for $n=4$, most algorithmic problems become intractable for $\mathbf{R} = \Z$, as indicated by Markov's result; on the other hand, the group $\SL_3(\Z)$ is still rather mysterious, with many open problems (e.g. coherence, see below). The freeness problem for subgroups of $\SL_3(\Z)$ is highly non-trivial, and has been studied e.g.\ by Zubkov \cite{Zubkov1998}. By contrast, the group $\SL_2(\Z)$ is virtually free and therefore comparatively much simpler to understand. Extending this to a larger ring, we find the group $\SL_2(\Q)$, and, as a particular case, the group $\SLP{p}$ for a fixed prime $p$. Here, similar problems remain as for $\SL_3(\Z)$, and the group $\SLP{p}$ is the starting point for this article. 

The problem central to this article is the following: given two matrices in $\SL_2(\Q)$, do they generate a free group? In fact, we will consider a very particular case of this problem: given a non-zero rational $q \in \Q$, is the (non-abelian) group $\Delta_q$ generated by the two matrices
\[
A = \begin{pmatrix}
1 & 0 \\ 1 & 1
\end{pmatrix}, \text{ and } Q_q = \begin{pmatrix}
1 & q \\ 0 & 1
\end{pmatrix}
\]
free? If not, what is its algebraic structure? These problems are very difficult. The problem of representing free groups by matrices of the type above is a classical topic in combinatorial group theory, cf.\ e.g.\ Magnus \cite[\S5, p. 72]{Magnus1931}, Fuchs-Rabinowitsch \cite{FuchsRabinowitsch1940}, Specht \cite[p. 319]{Specht1959}, and Ree \cite{Ree1961}. The above matrices also appear in generalizations of the \textit{Freiheitssatz} in some one-relator groups with torsion \cite{Ree1968}. Thus, a good deal is known about this problem. For example, when $\lvert q \rvert \geq 4$, the group $\Delta_q$ is known to be free \cite{Brenner1955}, and more generally, if one instead takes $q \in \C$ to be in the \textit{Riley slice} the group $\Delta_q$ is free \cite{Keen1994}, although this remarkable slice is outside the scope of the present article. We also remark that results on the semigroup generated by such matrices have been obtained by several authors \cite{Charnow1974, Brenner1978}.

We will restrict our attention to the case of $q \in \Q$ with $\lvert q \rvert \leq 4$, which by symmetry means we need only consider $q \in \Q \cap (0,4)$. Thus, our problem will be: \textit{is $\Delta_q = \langle A, Q_q \rangle$ ever a free group for $q \in \Q \cap (0,4)$?} This problem was first considered explicitly by Lyndon \& Ullman \cite{Lyndon1969}. Many articles have since treated this problem, and thus far every known rational $q$ in the specified interval gives rise to a non-free group $\Delta_q$. Merzlyakov \cite[Problem~15.83]{Kourovka2023} (cf.\ also Problem~15.84 therein) asks whether the group $\Delta_q$ is ever free for $q \in \Q \cap (0,4)$. Kim \& Koberda \cite{Kim2022} explicitly conjecture that the answer to this question is \textbf{no}, but it remains open in general even to find a dense subset of $\Q \cap (0,4)$ consisting of rational numbers which give rise to non-free $\Delta_q$. It is also not known whether the problem of freeness for $\Delta_q$ is decidable (although it is semi-decidable). Even the problem of whether $\Delta_q$ is finitely presented is an open problem. Lyndon \& Ullman suspected that the group structure of $\Delta_q$ may be as an amalgam of two free groups. If the main conjecture of this article -- Conjecture~\ref{Conj:main-conjecture} -- is true, then the Kim--Koberda conjecture is true, and Lyndon \& Ullman's suspicion is very close to the truth \S\ref{Sec:SL2}).

The theory and machinery of arithmetic matrix groups is sufficiently rich to place the non-freeness conjecture for $\Delta_q$ in a broader context. One particular instance of this is the Greenberg--Shalom Conjecture. This conjecture is rather technical, and we shall not state it; but if it is true, then it would have a number of remarkable consequences, surveyed recently in \cite{Brody2023}. For our purposes, the two most important consequences are that if $r/p \in \Q \cap (0,4)$ with $\gcd(r,p)=1$, then:
\begin{enumerate}
\item $\Delta_{r/p}$ is \textbf{not} free.
\item $\Delta_{r/p}$ has finite index in $\SLP{p}$.
\end{enumerate}
Both problems remain open. The second implies the first, as $\SLP{p}$ is not virtually free. Whereas the first question has been extensively studied \cite{Bamberg2000, Brenner1975, Ignatov1980, Ignatov1980b, Kim2022, Lyndon1969, Smilga2021}, the second remains essentially unexplored (although cf.\ \cite{Detinko2022}). Related to these questions is the following very natural question, which has essentially seen no study beyond the question of freeness:

\begin{question}\label{Quest:delta-rp-structure?}
What is the algebraic structure of $\Delta_{r/p}$?
\end{question}

In this article we will tackle both the finite index question and Question~\ref{Quest:delta-rp-structure?}. We will identify a very compelling conjecture regarding this structure. Specifically, we conjecture, and prove in many cases, that $\Delta_{r/p}$ is equal to $\bGamma_1^{(p)}(r)$, the congruence subgroup of $\SLP{p}$ generated by all matrices with diagonal entries congruent to $1 \pmod{r}$ and upper right entry congruent to $0 \pmod{r}$. Explicitly, we conjecture:

\begin{conjecture}\label{Conj:main-conjecture}
For all $r/p \in \Q \cap (0,4)$ with $\gcd(r,p)=1$, the group $\Delta_{r/p}$ is equal to the congruence subgroup $\bGamma_1^{(p)}(r)$ of $\SLP{p}$. In particular, $\Delta_{r/p}$ is not free, has index $J_2(r)$ in $\SLP{p}$, and is isomorphic to the fundamental group of a finite graph of groups with virtually free vertex groups (and is hence finitely presented).
\end{conjecture}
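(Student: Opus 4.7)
The plan is to split the conjecture into two parts: an easy containment $\Delta_{r/p} \subseteq \bGamma_1^{(p)}(r)$, and the much harder reverse inclusion, the latter combined with an index count then giving the structural claim. The easy direction is by inspection: since $\gcd(r,p)=1$, $p$ is a unit modulo $r$ inside $\Z[1/p]$, so the upper-right entry $r/p$ of $Q_q$ reduces to $0$ mod $r$, while $A$ manifestly lies in the required congruence subgroup. Everything then reduces to showing $\bGamma_1^{(p)}(r) \subseteq \Delta_{r/p}$.

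Next I would pin down what the target group looks like structurally, so that the full conclusion of the conjecture drops out from an index calculation. Using the standard action of $\SLP{p}$ on the Bruhat--Tits tree $T_p$ of $\SL_2(\Q_p)$, one has the Serre decomposition $\SLP{p} \cong \SL_2(\Z) *_{\Gamma_0(p)} \SL_2(\Z)$, with $\SL_2(\Z)$ vertex stabilizers and $\Gamma_0(p)$ edge stabilizers. Any subgroup $H \leq \SLP{p}$ of finite index acts on $T_p$ with finite quotient graph, so by Bass--Serre theory $H$ is the fundamental group of a finite graph of groups whose vertex and edge groups are finite-index subgroups of (conjugates of) $\SL_2(\Z)$; since $\SL_2(\Z)$ is virtually free, the structural statement of the conjecture follows automatically once finite index is known. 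The asserted index $J_2(r)$ equals $[\SL_2(\Z/r\Z) : L]$ for the lower-unitriangular subgroup $L$, which by surjectivity of the reduction $\SLP{p} \twoheadrightarrow \SL_2(\Z/r\Z)$ (inherited from $\SL_2(\Z)$) coincides with $[\SLP{p} : \bGamma_1^{(p)}(r)]$. Non-freeness of $\Delta_{r/p}$ is automatic once equality with a finite-index subgroup of the non-virtually-free group $\SLP{p}$ is established.

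For the crucial inclusion $\bGamma_1^{(p)}(r) \subseteq \Delta_{r/p}$ I would pick a tractable generating set for the congruence subgroup and exhibit each generator as an explicit word in $A$ and $Q_q$. Two complementary strategies suggest themselves. The first is combinatorial: manipulate products of $A$ and $Q_q$ via continued-fraction and Euclidean-algorithm manoeuvres on the bottom row; this should suffice for small $r$, where the bookkeeping remains manageable, and explains why the unconditional result is confined to $r \leq 4$ together with partial cases at $r = 5$. The second is arithmetic: for $r \in \{p-1,\, p+1,\, (p+1)/2\}$, the cyclic structure of $(\Z/r\Z)^\times$ allows suitable conjugates or commutators of $A$ and $Q_q$ to realise a primitive root modulo $p$ in some entry, which should permit a reduction from an arbitrary bottom row in $\bGamma_1^{(p)}(r)$ to a fixed normal form---\emph{provided} a specific integer is genuinely a primitive root modulo $p$, whence the appeal to a strong form of Artin's conjecture.

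The principal obstacle is precisely this reverse inclusion. Writing an arbitrary $M \in \bGamma_1^{(p)}(r)$ as a word in $A$ and $Q_q$ is tantamount to solving the membership problem in $\Delta_{r/p}$, and there is no formal reason why two parabolic generators should exhaust the whole congruence subgroup---that is exactly the substance of the conjecture. The natural reduction algorithms clear entries by elementary row operations but introduce denominators in $\Z[1/p]$ which must then be absorbed using powers $Q_q^{m} = \bigl(\begin{smallmatrix} 1 & mr/p \\ 0 & 1\end{smallmatrix}\bigr)$; the attainable multiples $mr/p$ are constrained by the congruence, and the combinatorics of these constraints rapidly become intractable as $r$ grows. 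This, more than anything, is why even $r=5$ is only partially settled, and why pushing beyond the arithmetic families listed above appears to demand genuinely new ideas.
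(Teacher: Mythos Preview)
The statement is a \emph{conjecture}, and the paper does not prove it in general; it verifies it only for the listed special cases. Your outline of the trivial containment $\Delta_{r/p} \leq \bGamma_1^{(p)}(r)$ and of the structural consequences (via the Serre decomposition and Bass--Serre theory) matches the paper. Where your sketch diverges from the paper is in the mechanism for the reverse inclusion, and you have the role of Artin's conjecture inverted.

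The paper does not attempt to express an arbitrary $M \in \bGamma_1^{(p)}(r)$ directly as a word in $A$ and $Q_{r/p}$ by Euclidean manoeuvres on the bottom row. Instead it isolates a single key reduction: it proves that $\bGamma_1^{(p)}(r)$ is generated by the \emph{three} matrices $A$, $Q_{r/p}$, and $U_p^{\sigma_p(r)}$, where $U_p = \begin{psmallmatrix} p & 0 \\ 0 & 1/p \end{psmallmatrix}$ and $\sigma_p(r)$ is the multiplicative order of $p$ mod $r$. This three-generator result is unconditional for $r \leq 4$ (because $\bGamma_1^\Z(r) = \langle A, Q_{r/p}^p\rangle$ there, and a ``coset-integral'' lemma then promotes this to $\bGamma_1^{(p)}(r)$ once any power of $U_p$ is adjoined), and is conditional on the strengthened Artin conjecture for general $r$. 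Granting it, the entire conjecture collapses to the concrete question: does some nonzero power of $U_p$ lie in $\Delta_{r/p}$? For each case treated ($r \leq 4$, most of $r=5$, and $r \in \{p-1,\,p+1,\,(p+1)/2\}$) the paper simply exhibits an explicit short word in $A, Q_{r/p}$ equal to $\pm U_p^k$; these are direct matrix identities requiring no number theory whatsoever.

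Your account of Artin's conjecture is therefore backwards. It is not used, as you suggest, to produce a primitive root modulo $p$ so as to handle the families $r \in \{p-1, p+1, (p+1)/2\}$; for those $r$ the relevant power of $U_p$ is written down explicitly. Rather, the strengthened Artin conjecture enters in the proof of the three-generator theorem for $\bGamma_1^{(p)}(r)$: one needs infinitely many primes $q$ in a prescribed arithmetic progression with $p$ a primitive root mod $q$, so that conjugation by powers of $U_p$ (which multiplies an off-diagonal entry by $p^{2k}$) can cycle that entry through the quadratic residues mod $q$ and feed a Euclidean reduction. In short, the Euclidean/continued-fraction ideas you mention live inside the proof that three generators suffice, not inside the verification for particular $r$; and the hard step you should be aiming at is the single identity $U_p^{k\sigma_p(r)} \in \Delta_{r/p}$.
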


Here $J_2(r)$ denotes the Jordan totient function counting the number of generators of $(\Z / r\Z)^2$. We will verify the conjecture in some cases; in \S\ref{Sec:final-section}, we prove:

\begin{theorem*}
Let $r/p \in \Q \cap (0,4)$ with $p$ prime, $\gcd(r,p)=1$. If either:
\begin{enumerate}
\item $r \in \{ 1 , 2, 3, 4\}$; or 
\item $r = 5$ and $p \not\equiv -1,1 \pmod{11}$,
\end{enumerate} 
then Conjecture~\ref{Conj:main-conjecture} holds for $\Delta_{r/p}$. Furthermore, if $p$ is arbitrary and 
\[
r \in \{ p-1, p+1, \frac{p+1}{2} \},
\]
the last case assuming $p$ is odd, then assuming a strong form of Artin's conjecture on primitive roots (Conjecture~\ref{Conj:strong-artin}) for $p$ holds, then Conjecture~\ref{Conj:main-conjecture} holds for $\Delta_{r/p}$. 
\end{theorem*}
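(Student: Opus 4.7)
The plan is to combine Bass--Serre theory with direct index computations. The easy direction $\Delta_{r/p}\subseteq\bGamma_1^{(p)}(r)$ is immediate from inspection of the generators: both $A$ and $Q_{r/p}$ have diagonal entries equal to $1$, and their upper-right entries ($0$ and $r\cdot p^{-1}$ respectively) are $\equiv 0\pmod r$ in $\Z[1/p]$. A standard calculation using the reduction map $\SLP{p}\twoheadrightarrow\SL_2(\Z/r\Z)$, which is well-defined because $\gcd(r,p)=1$, gives $[\SLP{p}:\bGamma_1^{(p)}(r)]=J_2(r)$. So it remains to produce enough elements in $\Delta_{r/p}$ to realise this index.

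A useful simplification is that modulo $r$, the matrix $A$ reduces to a lower-unipotent generator of a cyclic subgroup of order $r$, while $Q_{r/p}$ reduces to the identity; hence the image of $\Delta_{r/p}$ in $\SL_2(\Z/r\Z)$ is $\langle A\bmod r\rangle$, which already coincides with the image of $\bGamma_1^{(p)}(r)$. It therefore suffices to prove that $\Delta_{r/p}$ contains the principal congruence subgroup $\bGamma^{(p)}(r)$. To attack this I would invoke Serre's amalgamated decomposition of $\SLP{p}$ arising from its action on the Bruhat--Tits tree $\cT_p$ of $\SL_2(\Q_p)$: the matrix $A$ fixes one vertex (stabiliser isomorphic to $\SL_2(\Z)$) while $Q_{r/p}$ fixes a translated one, so $\Delta_{r/p}$ acts on $\cT_p$, and by Bass--Serre theory it is the fundamental group of a finite graph of virtually free groups. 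The quotient graph together with the stabilisers then determines the index.

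For $r=1$ this is classical: $A$ and $Q_{1/p}$ generate $\SLP{p}$ via a Euclidean algorithm in $\Z[1/p]$. For $r\in\{2,3,4\}$, and for $r=5$ under the stated restriction on $p$, the index $J_2(r)\in\{3,8,12,24\}$ is small enough that the quotient graph can be worked out by finite case analysis: one enumerates the cosets of $\bGamma_1^{(p)}(r)$ in $\SLP{p}$, tracks the action of the generators, and identifies the Bass--Serre structure of $\Delta_{r/p}$. The exceptional condition $p\not\equiv\pm1\pmod{11}$ for $r=5$ should reflect avoidance of a degenerate configuration in this coset graph (the involvement of $11$ and $r^2-1=24$ hints at a connection with conjugacy in $\SL_2(\mathbb{F}_5)$). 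For $r\in\{p-1,p+1,(p+1)/2\}$ the key is that $r$ is comparable to $p$, so $p$ reduces to one of $\pm 1,\pm 2$ modulo $r$, and products of the form $Q_{r/p}^a A^b Q_{r/p}^c$ have entries controllable via this reduction; strong Artin's conjecture then supplies a primitive root of $(\Z/r\Z)^\times$ of prescribed $p$-adic shape, needed to sweep out the full cyclic family of words realising the missing generators of $\bGamma^{(p)}(r)$.

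The main obstacle is this last step: realising concrete elements of $\bGamma^{(p)}(r)$ as explicit words in $A$ and $Q_{r/p}$, which is nontrivial even for moderate $r$. Bass--Serre theory provides the framework, but in each case the verification reduces to a congruence-theoretic statement: finitely many words in $A^{\pm k}$ and $Q_{r/p}^{\pm\ell}$ must hit all required residues modulo $r$. This is precisely where the Artin hypothesis is essential in the cases where $r$ grows with $p$: without an appropriate primitive root, the needed residues cannot be reached by a single cyclic family of words, and the argument breaks down. I expect the graph-of-groups decomposition asserted in Conjecture~\ref{Conj:main-conjecture} to emerge directly from this tree analysis, with virtually free vertex groups arising as subgroups of the virtually free group $\SL_2(\Z)$.
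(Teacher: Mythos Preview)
Your proposal does not identify the mechanism that actually drives the proof, and in two places your heuristics point in the wrong direction.

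The paper does not use Bass--Serre theory or the Bruhat--Tits tree to \emph{prove} $\Delta_{r/p}=\bGamma_1^{(p)}(r)$; that decomposition is only a \emph{consequence} once finite index is established. The engine of the proof is a criterion (the paper's Proposition~\ref{Prop:WC-is-congruence-iff-it-contains-Uk}): a subgroup $\cH\le\SLP{p}$ is a congruence subgroup if and only if $\cH\cap\SL_2(\Z)$ is a congruence subgroup of $\SL_2(\Z)$ \emph{and} $U_p^k\in\cH$ for some $k\neq 0$. This is proved by a coset ``pushing'' argument for the letters $U_p^{\pm 1}$ (Lemma~\ref{Lem:pushing-lemma}). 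With this in hand, the cases $r\le 4$ are handled uniformly for all $p$: since $\bGamma_1^\Z(r)=\langle A,\,Q_{r/p}^p\rangle$ for $r\le 4$, it only remains to exhibit $U_p^{\sigma_p(r)}$ (or $-U_p$) as an explicit short word in $A$ and $Q_{r/p}$, which the paper does by direct matrix identities whose exponents are polynomials in $p$. Your plan of ``enumerating cosets and tracking the action'' would not give a proof valid for all primes $p$ at once; it only reproduces the computational verification in Table~\ref{Tab:cmputational} for individual $p$.

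Your reading of the two special conditions is also off. For $r=5$, the number $11$ has nothing to do with $r^2-1$ or conjugacy in $\SL_2(\mathbb{F}_5)$: it is simply the top-left entry of the third generator $M_5=\begin{psmallmatrix}11&20\\-5&-9\end{psmallmatrix}$ of $\bGamma_1^\Z(5)$. Conjugation by $U_p^k$ scales the top-right entry by $p^{2k}$, and one needs $p^2$ to generate the (order-$5$) group of quadratic residues mod $11$ to reduce $M_5$ to a tractable form; this fails exactly when $p\equiv\pm 1\pmod{11}$. Likewise, the strengthened Artin conjecture is \emph{not} used to produce a primitive root of $(\Z/r\Z)^\times$. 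It is used in an auxiliary Euclidean reduction (Theorem~\ref{Thm:r-finite-index-number-theory-Artin}): given $M\in\bGamma_1^{(p)}(r)$, one right-multiplies by powers of $A$ so that the top-left entry becomes a prime $a\equiv 3\pmod 4$ for which $p$ is a primitive root mod $a$; conjugation by $U_p^k$ then allows reduction of the top-right entry modulo $a$. Artin's conjecture guarantees such a prime $a$ exists in the relevant arithmetic progression. For $r\in\{p-1,p+1,(p+1)/2\}$ one then only has to write down explicit words in $A,Q_{r/p}$ equalling $\pm U_p$ or $U_p^2$, which are again short and uniform in $p$.
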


Our work can be seen as support for two consequences of the Greenberg--Shalom Conjecture. Doubtlessly, the conjecture can be proved in many more cases with analogous techniques as in this article. We have also verified Conjecture~\ref{Conj:main-conjecture} computationally for many small values of $r/p$, including for all values $r/p \leq 4$ when $p \in \{2, 3,5, 7\}$. This is presented in Table~\ref{Tab:cmputational}. We also introduce the notion of a \textit{strong} relation number, which is to $\Delta_{r/p}$ having finite index in $\SLP{p}$ what being a relation number is to $\Delta_{r/p}$ not being free.

Finally, studying the algebraic structure of $2$-generated subgroups of $\SLP{p}$ is also relevant for the \textit{coherence} problem, i.e.\ whether or not every finitely generated subgroup is finitely presented. This is open; however, for prime $p$ the Greenberg--Shalom conjecture implies a positive answer to this question \cite[Theorem~6.2]{Brody2023}. For $n$ with two distinct prime factors $p$ and $q$, an argument by Baumslag \cite[p. 734]{Canberra1974} shows that $\SLP{n}$ is incoherent, in fact the group generated by 
\[
A = \begin{pmatrix}
1 & 0 \\ 1 & 1
\end{pmatrix} \quad \text{and} \quad \begin{pmatrix}
p/q & 0 \\ 0 & q/p
\end{pmatrix}
\]
is not finitely presented. Thus, understanding the structure of $2$-generated subgroups of $\SLP{p}$ seems a worthwhile task; for example, it is not known to the author whether every $2$-generated subgroup of $\SLP{p}$ is finitely presented. Our main conjecture (Conjecture~\ref{Conj:main-conjecture}) implies that the answer to this question is yes at least in the case of the $2$-generated groups $\Delta_{r/p}$.

\section*{Acknowledgement}

I would like to thank Sang-hyun Kim (KIAS) for introducing me to the freeness problem for $\Delta_{r/p}$ and for many helpful discussions at every stage of this project.

\clearpage

\section{$\SL_2$}\label{Sec:SL2}

For a ring $\mathbf{R}$, we denote by $\SL_2(\mathbf{R})$ the ring of all $2 \times 2$-matrices with entries in $\mathbf{R}$ and with determinant $1$. We will only consider the multiplicative structure of $\SL_2(\mathbf{R})$ in this article, and refer to it as a group.

\subsection{Generators and relations}

Throughout this article, we will consider $\SL_2(\Z)$ defined abstractly by the group presentation
\begin{equation}
\SL_2(\Z) = \pres{Gp}{a,b}{b^2 = (ab)^3, b^4=1}.
\end{equation}
We will, however, seamlessly pass between this presentation and considering $\SL_2(\Z)$ as a matrix group, via the isomorphism induced by 
\[a \mapsto A = \begin{pmatrix} 1 & 0 \\ 1 & 1 \end{pmatrix}, \quad b \mapsto B = \begin{pmatrix} 0 & 1 \\ -1 & 0 \end{pmatrix}.\]
Throughout this article, we will retain this notation for the matrices $A$ and $B$. 

Giving generators and relations for $\SLP{p}$ is more difficult, but two such descriptions are known: the first, described by Serre \cite[II.1.4]{Serre1980} comes via the action of $\SLP{p}$ on a tree:
\begin{equation}\label{Eq:amalgam-decomp-SL2Zp}
\SLP{p} \cong \SL_2(\Z) \ast_{\Gamma_0^\Z(p)} \SL_2(\Z)
\end{equation}
where the inclusion map is specified in \cite[Corollary~II.1.2]{Serre1980} (we will not use it). Here $\Gamma_0^\Z(p)$ is used to denote the subgroup of $\SL_2(\Z)$ consisting of all matrices with bottom left entry congruent to $0 \pmod{p}$. Second, Behr \& Mennicke \cite{Behr1968} provide a presentation for all $p$.  For $p=2,3$, the presentation is
\[
\SLP{p} \cong \pres{Gp}{a,b,u_p}{(ab)^3 = (u_p b)^2 = (b u_p a^p)^3 = b^2, b^4 = 1, u_p^{-1} a u = a^{p^2}}
\]
The generators $a$ and $b$ correspond to $A$ and $B$ above, and $u_p$ corresponds to
\[
U_p = \begin{pmatrix}
p & 0 \\ 0 & 1/p
\end{pmatrix}.
\]
Throughout this article, we will retain the notation $U_p$ for this matrix. To see that $A,B,U_p$ generate $\SLP{p}$ is easy, cf.\ \cite[p. 204]{Mennicke1967}. However, we also note that \textit{any} non-trivial power of $U_p$ will generate $\SLP{p}$ together with $A$ and $B$, as an easy multiplication shows that
\begin{equation}\label{Eq:U_k-generates}
U_p = U_p^k A^{p^{k-1}} U_p^{-k} B A^{p^{k+1}} U_p^{-k} B A^{p^{k-1}}B^{-1}
\end{equation}
for any non-zero $k \in \Z$. We will also make consistent use of the following relations, which allow us to ``push'' $U_p$-type letters to the left. 
\begin{equation}\label{Eq:relations-of-SL2Zp-A-B-push}
B^i U_p  = U_p^{(-1)^i} B^i, \qquad AU_p = U_p A^{p^2}.
\end{equation}

\subsection{Congruence subgroups}\label{Subsec:congruence}

In this section, we will collect some known results about congruence subgroups of $\SL_2(\Z)$ and $\SLP{p}$. 

Let $r \geq 1$, and consider the natural surjection $\pi_r^\Z \colon \SL_2(\Z) \to \SL_2(\Z / r \Z)$ resp.\ (when $\gcd(p,r)=1$) the map $\pi_r^{(p)} \colon \SLP{p} \to \SL_2(\Z / r\Z)$ in each case defined by reducing each entry modulo $r$. The kernel of such a map is called a \textit{principal congruence subgroup} (of level $r$) of $\SL_2(\Z)$ resp.\ $\SLP{p}$, and will here be denoted $\ker \pi_r^\Z = \Gamma^\Z(r)$ resp.\ $\ker \pi_r^{(p)} = \Gamma^{(p)}(r)$. Jordan proved in the \textit{Trait\'e} \cite[p. 95, Th\'eor\`eme~123]{Jordan1870} that \begin{equation}\label{Eq:SL2r-order}
\lvert \SL_2(\Z / r\Z) \rvert = r J_2(r) \sim \frac{r^3}{\zeta(3)}
\end{equation}
where $J_2(r)$ denotes the \textit{Jordan totient function}, being the number of pairs $(a,b)$ with $1 \leq a, b \leq r$ with $\gcd(a,b,r)=1$, and $\zeta(3)$ is Ap\'ery's constant. In particular, \eqref{Eq:SL2r-order} also gives the index of the $r$th principal congruence subgroup $\Gamma^\Z(r)$ in $\SL_2(\Z)$ (resp.\ the index of $\Gamma^{(p)}(r)$ in $\SLP{p}$). The sequence of values of $J_2(r)$ begins
\[
1, 3, 8, 12, 24, 24, 48, 48, 72, 72, 120, 96, \dots
\]
being OEIS sequence A007434. Clearly, for $p$ prime $J_2(p) = p^2 - 1$. 

It is well-known that not every subgroup of finite index in $\SL_2(\Z)$ contains some $\Gamma^\Z(r)$ (indeed, most do not), i.e.\ $\SL_2(\Z)$ does not have the \textit{Congruence Subgroup Property}. By contrast, a theorem of Serre \cite{Serre1970} and Mennicke \cite{Mennicke1967}\footnote{Mennicke's proof is incomplete, as it relies on an incorrect computation of the Schur multiplier of $\SL_2(\Z / r\Z)$. The argument is corrected in \cite[\S5]{Beyl1986}.} shows that $\SLP{p}$ has this property for all primes $p$. If $\cH \leq \SLP{p}$ (or $\SL_2(\Z)$) is a congruence subgroup containing $\Gamma^\Z(r)$, then we say that $\cH$ is an $r$-congruence subgroup. We define some important congruence subgroups of $\SL_2(\Z)$ and $\SLP{p}$. Let $r \geq 1$ and $\gcd(r,p)=1$. Then we let 
\[
\bGamma_1^\Z(r) = \left\{ 
\begin{pmatrix}
a & b \\ c & d
\end{pmatrix} \in \SL_2(\Z) \:\:\bigg\vert\:\: a \equiv d \equiv 1 \Mod{r}, b \equiv 0 \Mod{r} \right\}
\]
and analogously for $\bGamma_1^{(p)}(r)$, i.e.\ $\bGamma_1^\Z(r)$ (resp.\ $\bGamma_1^{(p)}(r)$) is the pre-image under $\pi^\Z_r$ (resp.\ $\pi^{(p)}_r$) of the group of lower unitriangular matrices in $\SL_2(\Z/r\Z)$. Since that group is cyclic of order $r$, it follows that 
\begin{equation}
[\SLP{p} : \bGamma_1^{(p)}(r)] = [\SL_2(\Z) : \bGamma_1^{\Z}(r)] = J_2(r).
\end{equation}
The structures of $\bGamma_1^{(p)}(r)$ and $\bGamma_1^\Z(r)$ are quite different, however. It is well-known that for $r \geq 4$ the group $\bGamma_1^\Z(r)$ is free; indeed, denoting its rank by $n$, a simple Euler characteristic argument (see \cite[IX.8]{Brown1982}) yields $\chi(\SL_2(\Z)) = \frac{1-n}{J_2(r)}$ and since $\chi(\SL_2(\Z)) = \zeta(-1) = -\frac{1}{12}$, we have that $n = 1 + \frac{J_2(r)}{12}$. By contrast, $\bGamma_1^{(p)}(r)$ is never free (as $\SLP{p}$ is not virtually free), but by general Bass--Serre theory the decomposition \eqref{Eq:amalgam-decomp-SL2Zp} implies that $\bGamma_1^{(p)}(r)$ is the fundamental group of a finite graph of groups with virtually free vertex groups. We will also prove that $\bGamma_1^{(p)}(r)$ can sometimes (conjecturally always) be generated by three elements in \S\ref{Sec:gamma1-generators}.

\section{The group $\Delta_{r/p}$}\label{Sec:Delta-intro}

For any prime $p$ and $r \in \Z$ such that $p$ and $r$ are coprime. When $\frac{r}{p} \in \Q \cap (0,4)$, we will say that  $r/p$ is \textit{admissible.} Let 
\[
A = \begin{pmatrix}
1 & 0 \\ 1 & 1
\end{pmatrix} \quad \text{and} \quad Q_{r/p} = \begin{pmatrix}
1 & r/p \\ 0 & 1
\end{pmatrix}.
\]
Let $\Delta_{r/p} = \langle A, Q_{r/p} \rangle \leq \SLP{p}$. If $r/p$ is not admissible, then $\Delta_{r/p}$ is free \cite{Sanov1947, Brenner1955}. On the other hand, there is no known example of an admissible (rational) number for which the group $\Delta_{r/p}$ is free; this was noted already by Lyndon \& Ullman \cite{Lyndon1969} in 1969, and Kim \& Koberda \cite{Kim2022} conjectured that \textit{every} admissible rational number gives rise to a non-free $\Delta_{r/p}$. Let us say that $r/p$ is a \textit{relation} number if $\Delta_{r/p}$ is not free (on two generators). We mention a useful necessary and sufficient condition for $r/p$ to be a relation number: that there exist some non-trivial product of the matrices $A$ and $Q_{r/p}$, involving both $A$ and $Q_{r/p}$ non-trivially, which equals an upper triangular matrix. Some $Q_{r/p}$-conjugate of this matrix will then commute with $Q_{r/p}$, proving non-freeness (cf.\ \cite[Lemma~2.1]{Kim2022}). 

There are many known examples of rational relation numbers. We refer the reader to Kim \& Koberda \cite{Kim2022} and Gilman \cite{Gilman2008} for references. Kim \& Koberda, in particular, prove that $r/p$ is a relation number for all $r \leq 27$, except possibly $r = 24$. The main difficulty in proving theorems of this type is that the shortest length of a relation (in the sense of the syllable length of the product of $A$ and $Q_{r/p}$ that is required for a product to equal $1$) grows very rapidly as $r/p \to 4$. There is also no known subset of $\Q \cap (0,4)$ in which the relation numbers are dense. Furthermore, there is no known example of a sequence of distinct relation numbers converging to $3$ or to $4$. By contrast, it is easy to prove that the algebraic relation numbers (defined analogously) are dense in $(0,4)$, see e.g.\ Chang, Jennings \& Ree \cite[Corollary~1]{Chang1958} or \cite[Corollary~1.5(ii)]{Slanina2016}. It is also easy to construct a series of rational relation numbers converging to $2 + \sqrt{2} = 3.414...$. For example, consider the sequence
\begin{equation}\label{Eq:convs-to-2}
\frac{3}{1}, \frac{7}{2}, \frac{41}{12}, \frac{239}{70}, \frac{1393}{408}, \frac{8119}{2378} \dots \longrightarrow 2 + \sqrt{2}
\end{equation}
consisting entirely of principal convergents to $2 + \sqrt{2}$, i.e.\ $2$ added to the solutions to the Pell equation $X^2 + 2Y^2 = 1$. Then it is not hard to prove that the $n$th entry $q_n$ of the sequence will be such that $Q_{q_n}^{a(n)}(A^{-1}Q_{q_n})^3A^{-1}Q_{q_n}^{a(n)}$ is upper triangular, where $a(1) = 0, a(2)= 2$ and recursively $a(n) = 34 a(n-1) - a(n-2) - 8$ for $n \geq 3$. Thus the numbers in \eqref{Eq:convs-to-2} are all rational relation numbers.

In spite of the extensive literature on the freeness problem, going back 50+ years, there has been essentially no research done regarding the algebraic structure of the group $\Delta_{r/p}$. This article aims to fill that gap. In \S\ref{Sec:final-section}, we will describe the structure explicitly in many cases, and prove that in these cases the group $\Delta_{r/p}$ is a congruence subgroup of $\SLP{p}$, and in particular that $\Delta_{r/p}$ has finite index in $\SLP{p}$. Indeed, we will show in many cases that $\Delta_{r/p} = \bGamma^{(p)}_1(r)$, as defined in \S\ref{Sec:SL2}. Among many other consequences of this general conjecture, including the resolution of the Kim--Koberda Conjecture, it would also imply e.g.\ that $\Delta_{r/p}$ is the fundamental group of a finite graph of groups with virtually free vertex groups. This sort of decomposition for $\Delta_{r/p}$ was predicted already by Lyndon \& Ullman in 1969 (cf.\ \cite[p.\ 1401]{Lyndon1969}). We remark that the inclusion $\Delta_{r/p} \leq \bGamma_1^{(p)}(r)$ is trivially true; both $A$ and $Q_{r/p}$ clearly lie in $\bGamma_1^{(p)}(r)$. This leads us directly to Conjecture~\ref{Conj:main-conjecture}. We will deal with one easy case of the conjecture, first, namely when $r=1$. 

\begin{proposition}\label{Prop:r=1}
The group generated by $A$ and $Q_{1/p}$ has index $1$ in $\SLP{p}$. That is, $\Delta_{1/p} = \bGamma_1^{(p)}(1) = \SLP{p} \cong \SL_2(\Z) \ast_{\Gamma_0^\Z(p)} \SL_2(\Z)$. 
\end{proposition}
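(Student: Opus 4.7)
The content reduces to proving $\SLP{p} \subseteq \Delta_{1/p}$, since the reverse inclusion is visible from $A, Q_{1/p} \in \SLP{p}$, and the promised amalgam decomposition then follows from \eqref{Eq:amalgam-decomp-SL2Zp}. Recall from \S\ref{Sec:SL2} that $\SLP{p}$ is generated by $A$, $B$, and $U_p$, so the plan is simply to exhibit $B$ and $U_p$ as words in $A$ and $Q_{1/p}$.

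The first containment is almost free: since $Q_{1/p}^{p} = Q_1 = \bigl(\begin{smallmatrix}1 & 1 \\ 0 & 1\end{smallmatrix}\bigr)$ and it is classical that $\SL_2(\Z) = \langle A, Q_1 \rangle$, we obtain $\SL_2(\Z) \subseteq \Delta_{1/p}$, and in particular $B \in \Delta_{1/p}$. The only real step is producing $U_p$. My approach is to conjugate $Q_{1/p}$ by $A^p$: this cancels the $(1,1)$-entry and plants $-p$ in the $(2,1)$-entry, so that $A^p Q_{1/p} A^{-p}$ becomes the anti-diagonal-looking matrix $\bigl(\begin{smallmatrix}0 & 1/p \\ -p & 2\end{smallmatrix}\bigr)$. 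Left-multiplying by $B^{-1}$ rotates this into upper-triangular form with $p$ in the $(1,1)$-slot and $1/p$ in the $(2,2)$-slot, and a right multiplication by a power of $Q_{1/p}$ kills the spurious upper-right entry. Concretely, I would verify by a direct $2\times 2$ computation the identity
\[
U_p \;=\; B^{-1}\,A^{p}\,Q_{1/p}\,A^{-p}\,Q_{1/p}^{\,2},
\]
whose right-hand side lives entirely in $\Delta_{1/p}$. Together with $A, B \in \Delta_{1/p}$ and the generation $\SLP{p} = \langle A, B, U_p\rangle$, this completes the argument.

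There is no genuine obstacle here; as the base case of the article's programme, the whole proof is elementary arithmetic. The only mildly clever step is the choice of short word for $U_p$, but this is essentially forced: the unique source of the denominator $p$ is the $(1,2)$-entry of $Q_{1/p}$, and the only way to relocate it to the $(2,2)$-entry of a diagonal matrix is to first conjugate $Q_{1/p}$ into a matrix with $-p$ in the $(2,1)$-entry (most cheaply done by $A^p$) and then swap rows via $B^{-1}$, after which a small power of $Q_{1/p}$ is available to tidy up what remains in the upper-right.
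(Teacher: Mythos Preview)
Your proof is correct and follows essentially the same approach as the paper: both arguments exhibit $B$ and $U_p$ as words in $A$ and $Q_{1/p}$ and then invoke $\SLP{p}=\langle A,B,U_p\rangle$. The paper uses the slightly shorter identities $B=A^{-1}Q_{1/p}^{p}A^{-1}$ and $U_p=B^{-1}Q_{1/p}A^{-p}Q_{1/p}$, but your word $U_p=B^{-1}A^{p}Q_{1/p}A^{-p}Q_{1/p}^{2}$ checks out as well.
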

\begin{proof}
This is a simple consequence of the two equalities $B = A^{-1} Q_{1/p}^p A^{-1}$ and $U_p = B^{-1} Q_{1/p} A^{-p}Q_{1/p}$. Hence, $\SLP{p} = \langle A, B, U_p \rangle = \langle A, Q_{1/p} \rangle$. 
\end{proof}

A particular consequence of Conjecture~\ref{Conj:main-conjecture} is that $\Delta_{r/p}$ has index $J_2(r)$ in $\SLP{p}$. Since $J_2(1) = 1$, we have now verified at least this trivial instance of the index formula. The conjecture can, of course, also be verified computationally using the presentation for $\SLP{p}$ given in \cite{Behr1968}. The results of this for small values of $r/p$ is presented in Table~\ref{Tab:cmputational}. This is obviously very compelling computational evidence in favour of the conjecture. Before continuing to more general $r \geq 2$, we remark that there is no dependency on $p$ in the statement of Proposition~\ref{Prop:r=1}; it appears to be the numerator which plays a dominating role in the structure of $\Delta_{r/p}$.

\begin{table}
\begin{tabular}{l|lll|l|l|l|l|}
\cline{2-6} \cline{8-8}
                             & \multicolumn{1}{l|}{$p=2$}                    & \multicolumn{1}{l|}{$p=3$}                    & $p=5$                    & $p=7$  & $p=11$ &  & $J_2(r)$ \\ \cline{1-6} \cline{8-8} 
\multicolumn{1}{|l|}{$r=1$}  & \multicolumn{1}{l|}{$1$}                      & \multicolumn{1}{l|}{$1$}                      & $1$                      & $1$    & $1$    &  & $1$      \\ \cline{1-6} \cline{8-8} 
\multicolumn{1}{|l|}{$r=2$}  & \multicolumn{1}{l|}{\xmark}                   & \multicolumn{1}{l|}{$3$}                      & $3$                      & $3$    & $3$    &  & $3$      \\ \cline{1-6} \cline{8-8} 
\multicolumn{1}{|l|}{$r=3$}  & \multicolumn{1}{l|}{$8$}                      & \multicolumn{1}{l|}{\xmark}                   & $8$                      & $8$    & $8$    &  & $8$      \\ \cline{1-6} \cline{8-8} 
\multicolumn{1}{|l|}{$r=4$}  & \multicolumn{1}{l|}{\xmark}                   & \multicolumn{1}{l|}{$12$}                     & $12$                     & $12$   & $12$   &  & $12$     \\ \cline{1-6} \cline{8-8} 
\multicolumn{1}{|l|}{$r=5$}  & \multicolumn{1}{l|}{$24$}                     & \multicolumn{1}{l|}{$24$}                     & \xmark                   & $24$   & $24$   &  & $24$     \\ \cline{1-6} \cline{8-8} 
\multicolumn{1}{|l|}{$r=6$}  & \multicolumn{1}{l|}{\xmark}                   & \multicolumn{1}{l|}{\xmark}                   & $24$                     & $24$   & $24$   &  & $24$     \\ \cline{1-6} \cline{8-8} 
\multicolumn{1}{|l|}{$r=7$}  & \multicolumn{1}{l|}{$48$}                     & \multicolumn{1}{l|}{$48$}                     & $48$                     & \xmark & $48$   &  & $48$     \\ \cline{1-6} \cline{8-8} 
\multicolumn{1}{|l|}{$r=8$}  & \multicolumn{1}{l|}{\cellcolor[HTML]{C0C0C0}} & \multicolumn{1}{l|}{$48$}                     & $48$                     & $48$   & $48$   &  & $48$     \\ \cline{1-1} \cline{3-6} \cline{8-8} 
\multicolumn{1}{|l|}{$r=9$}  & \multicolumn{1}{l|}{\cellcolor[HTML]{C0C0C0}} & \multicolumn{1}{l|}{\xmark}                   & $72$                     & $72$   & $72$   &  & $72$     \\ \cline{1-1} \cline{3-6} \cline{8-8} 
\multicolumn{1}{|l|}{$r=10$} & \multicolumn{1}{l|}{\cellcolor[HTML]{C0C0C0}} & \multicolumn{1}{l|}{$72$}                     & \xmark                   & $72$   & $72$   &  & $72$     \\ \cline{1-1} \cline{3-6} \cline{8-8} 
\multicolumn{1}{|l|}{$r=11$} & \multicolumn{1}{l|}{\cellcolor[HTML]{C0C0C0}} & \multicolumn{1}{l|}{$120$}                    & $120$                    & $120$  & \xmark &  & $120$     \\ \cline{1-1} \cline{3-6} \cline{8-8} 
\multicolumn{1}{|l|}{$r=12$} & \cellcolor[HTML]{C0C0C0}                      & \multicolumn{1}{l|}{\cellcolor[HTML]{C0C0C0}} & $96$                     & $96$   & $96$   &  & $96$     \\ \cline{1-1} \cline{4-6} \cline{8-8} 
\multicolumn{1}{|l|}{$r=13$} & \cellcolor[HTML]{C0C0C0}                      & \multicolumn{1}{l|}{\cellcolor[HTML]{C0C0C0}} & $168$                    & $168$  & $168$  &  & $168$    \\ \cline{1-1} \cline{4-6} \cline{8-8} 
\multicolumn{1}{|l|}{$r=14$} & \cellcolor[HTML]{C0C0C0}                      & \multicolumn{1}{l|}{\cellcolor[HTML]{C0C0C0}} & $144$                    & \xmark & $144$  &  & $144$    \\ \cline{1-1} \cline{4-6} \cline{8-8} 
\multicolumn{1}{|l|}{$r=15$} & \cellcolor[HTML]{C0C0C0}                      & \multicolumn{1}{l|}{\cellcolor[HTML]{C0C0C0}} & \xmark                   & $192$  & $192$  &  & $192$    \\ \cline{1-1} \cline{4-6} \cline{8-8} 
\multicolumn{1}{|l|}{$r=16$} & \cellcolor[HTML]{C0C0C0}                      & \multicolumn{1}{l|}{\cellcolor[HTML]{C0C0C0}} & $192$                    & $192$  & $192$  &  & $192$    \\ \cline{1-1} \cline{4-6} \cline{8-8} 
\multicolumn{1}{|l|}{$r=17$} & \cellcolor[HTML]{C0C0C0}                      & \multicolumn{1}{l|}{\cellcolor[HTML]{C0C0C0}} & $288$                    & $288$  & $288$  &  & $288$    \\ \cline{1-1} \cline{4-6} \cline{8-8} 
\multicolumn{1}{|l|}{$r=18$} & \cellcolor[HTML]{C0C0C0}                      & \multicolumn{1}{l|}{\cellcolor[HTML]{C0C0C0}} & $216$                    & $216$  & $216$  &  & $216$    \\ \cline{1-1} \cline{4-6} \cline{8-8} 
\multicolumn{1}{|l|}{$r=19$} & \cellcolor[HTML]{C0C0C0}                      & \multicolumn{1}{l|}{\cellcolor[HTML]{C0C0C0}} & $360$                    & $360$  & $360$  &  & $360$    \\ \cline{1-1} \cline{4-6} \cline{8-8} 
\multicolumn{1}{|l|}{$r=20$} & \cellcolor[HTML]{C0C0C0}                      & \cellcolor[HTML]{C0C0C0}                      & \cellcolor[HTML]{C0C0C0} & $288$  & $288$  &  & $288$    \\ \cline{1-1} \cline{5-6} \cline{8-8} 
\multicolumn{1}{|l|}{$r=21$} & \cellcolor[HTML]{C0C0C0}                      & \cellcolor[HTML]{C0C0C0}                      & \cellcolor[HTML]{C0C0C0} & \xmark & $440$  &  & $440$    \\ \cline{1-1} \cline{5-6} \cline{8-8} 
\multicolumn{1}{|l|}{$r=22$} & \cellcolor[HTML]{C0C0C0}                      & \cellcolor[HTML]{C0C0C0}                      & \cellcolor[HTML]{C0C0C0} & $360$  & \xmark &  & $360$    \\ \cline{1-1} \cline{5-6} \cline{8-8} 
\multicolumn{1}{|l|}{$r=23$} & \cellcolor[HTML]{C0C0C0}                      & \cellcolor[HTML]{C0C0C0}                      & \cellcolor[HTML]{C0C0C0} & $528$  & $528$  &  & $528$    \\ \cline{1-1} \cline{5-6} \cline{8-8} 
\multicolumn{1}{|l|}{$r=24$} & \cellcolor[HTML]{C0C0C0}                      & \cellcolor[HTML]{C0C0C0}                      & \cellcolor[HTML]{C0C0C0} & $384$  & $384$  &  & $384$    \\ \cline{1-1} \cline{5-6} \cline{8-8} 
\multicolumn{1}{|l|}{$r=25$} & \cellcolor[HTML]{C0C0C0}                      & \cellcolor[HTML]{C0C0C0}                      & \cellcolor[HTML]{C0C0C0} & $600$  & $600$  &  & $600$    \\ \cline{1-1} \cline{5-6} \cline{8-8} 
\multicolumn{1}{|l|}{$r=26$} & \cellcolor[HTML]{C0C0C0}                      & \cellcolor[HTML]{C0C0C0}                      & \cellcolor[HTML]{C0C0C0} & $504$  & $504$  &  & $504$    \\ \cline{1-1} \cline{5-6} \cline{8-8} 
\multicolumn{1}{|l|}{$r=27$} & \multicolumn{3}{l|}{\cellcolor[HTML]{C0C0C0}}                                                                            & $684$  & $684$  &  & $684$    \\ \cline{1-6} \cline{8-8} 
\end{tabular}

\vspace{0.2cm}

\caption{Some computational evidence in favour of the main conjecture (Conjecture~\ref{Conj:main-conjecture}). Each box contains the index of $\Delta_{r/p}$ in $\SLP{p}$, computed using GAP. In each instance, we also verified that $\Delta_{r/p} = \bGamma_1^{(p)}(r)$. The blank (grey) boxes correspond to values $r/p \geq 4$. The red crosses $\xmark$ correspond to integers $r/p$, and in those cases $\Delta_{r/p}$ has infinite index in $\SLP{p}$. }
\label{Tab:cmputational}
\end{table}

\clearpage

\section{A sufficient criterion for congruence subgroups}\label{Sec:technical-cosets}

Throughout this section, fix a prime $p$. Let $\cH \leq \SLP{p}$. We say that $\cH \leq \SLP{p}$ is a \textit{weak congruence subgroup} if $\cH \cap \SL_2(\Z)$ is a congruence subgroup of $\SL_2(\Z)$. In particular, $\SL_2(\Z)$ is a weak congruence subgroup of $\SLP{p}$. A (right) coset $\cH W$ of $\cH$ is an \textit{integral} coset if there is some $M \in \SL_2(\Z)$ such that $M \in \SL_2(\Z)$. We will say that $\cH$ has an \textit{integral right coset transversal} (in $\SLP{p}$), or simply that $\cH$ is a \textit{coset-integral} subgroup (of $\SLP{p}$), if $\cH$ has a complete right coset transversal consisting entirely of integer cosets. That is, $\cH$ is a coset-integral subgroup if and only if $\SLP{p} = \cH \cdot \SL_2(\Z)$ as sets. 

As noted by Mennicke \cite[p.\ 220]{Mennicke1967}, $U_p$ is a redundant generator modulo $\ker \pi^{(p)}_r$ for any $r \in \N$ with $\gcd(r,p)=1$. Specifically, we have 
\begin{equation}\label{Eq:U_p-in-quotient}
U_p = A^{p^{-1}} B A^p B A^{p^{-1}} B^{-1} \mod{\ker \pi^{(p)}_r},
\end{equation}
Hence, using \eqref{Eq:U_p-in-quotient} together with the normality of $\ker \pi^{(p)}_r$, we observe that every congruence subgroup of $\SLP{p}$ is coset-integral. The converse, when restricted to weak congruence subgroups, is also true:

\begin{lemma}\label{Lem:weak-congruence-congruence-iff-coset-integral}
Let $\cH$ be a weak congruence subgroup of $\SLP{p}$. Then $\cH$ has only finitely many integral cosets. In particular, $\cH$ is a congruence subgroup if and only if $\cH$ is an coset-integral subgroup.
\end{lemma}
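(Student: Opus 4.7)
The plan is to reduce both assertions to counting integral cosets and then invoking the Congruence Subgroup Property for $\SLP{p}$. The first step is to observe that if $W_1, W_2 \in \SL_2(\Z)$, then $\cH W_1 = \cH W_2$ if and only if $W_1 W_2^{-1} \in \cH$, and this automatically forces $W_1 W_2^{-1} \in \cH \cap \SL_2(\Z)$. Consequently, the assignment $W \mapsto \cH W$ induces a bijection between the right cosets of $\cH \cap \SL_2(\Z)$ in $\SL_2(\Z)$ and the integral right cosets of $\cH$ in $\SLP{p}$. Since $\cH$ is a weak congruence subgroup, $\cH \cap \SL_2(\Z)$ has finite index in $\SL_2(\Z)$, giving the first claim.

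For the biconditional, one direction is essentially stated in the text just before the lemma: if $\cH \supseteq \Gamma^{(p)}(r)$ for some $r$ coprime to $p$, then the identity \eqref{Eq:U_p-in-quotient} places $U_p$ inside $\Gamma^{(p)}(r) \cdot \langle A, B\rangle$, so $U_p \in \cH \cdot \SL_2(\Z)$. Since $\SLP{p} = \langle A, B, U_p \rangle$, and $\cH$ is normalized modulo itself, an elementary induction on word length in $A, B, U_p^{\pm 1}$ shows that every element of $\SLP{p}$ can be written as $h \cdot M$ with $h \in \cH$ and $M \in \SL_2(\Z)$, i.e., $\cH$ is coset-integral.

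For the converse direction, suppose $\cH$ is coset-integral, so $\SLP{p} = \cH \cdot \SL_2(\Z)$. Then every right coset of $\cH$ has a representative in $\SL_2(\Z)$, i.e., is integral. Combining this with the first part of the lemma, the total number of right cosets of $\cH$ in $\SLP{p}$ is at most $[\SL_2(\Z) : \cH \cap \SL_2(\Z)] < \infty$. Thus $\cH$ has finite index in $\SLP{p}$, and by the Congruence Subgroup Property for $\SLP{p}$ (Serre--Mennicke), $\cH$ must be a congruence subgroup.

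I do not expect a serious obstacle here: the argument is essentially a bookkeeping exercise around the bijection in the first paragraph, and the nontrivial input (the CSP for $\SLP{p}$) is invoked as a black box. The only point that requires a little care is ensuring, in the $(\Rightarrow)$ direction, that $\SLP{p} = \cH \cdot \SL_2(\Z)$ genuinely follows from $U_p \in \cH \cdot \SL_2(\Z)$; this uses the push relations \eqref{Eq:relations-of-SL2Zp-A-B-push} to move any occurrence of $U_p^{\pm 1}$ past $A$'s and $B$'s at the cost of modifying the integer-matrix part, which lies entirely in $\SL_2(\Z)$.
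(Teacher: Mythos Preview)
Your proof is correct and follows essentially the same route as the paper: the bijection between integral cosets of $\cH$ and cosets of $\cH\cap\SL_2(\Z)$ in $\SL_2(\Z)$ gives finiteness, the Congruence Subgroup Property handles $(\Leftarrow)$, and \eqref{Eq:U_p-in-quotient} handles $(\Rightarrow)$. One small correction: in the $(\Rightarrow)$ direction the right tool is the \emph{normality} of $\Gamma^{(p)}(r)$ in $\SLP{p}$ (which is what the paper invokes in the paragraph preceding the lemma), not the push relations \eqref{Eq:relations-of-SL2Zp-A-B-push} --- the latter only let you move $U_p$ one way past $A$ with integral exponents, so your closing remark does not quite do the job, whereas normality makes your induction step $hM\cdot U_p^{\pm1}=h\,(Mh'M^{-1})\,MM'\in\cH\cdot\SL_2(\Z)$ immediate.
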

\begin{proof}
Every congruence subgroup is coset-integral by using \eqref{Eq:U_p-in-quotient}. For the converse, suppose $\cH$ is a coset-integral weak congruence subgroup. Any two disjoint integral cosets of $\cH$ in $\SLP{p}$ are also two disjoint integral cosets for $\cH \cap \SL_2(\Z)$ in $\SL_2(\Z)$. Since $\cH$ is a weak congruence subgroup, there are only finitely many of the latter, and so there are finitely many of the former; i.e.\ $\cH$ is a finite index subgroup of $\SLP{p}$. By the congruence subgroup property for $\SLP{p}$, $\cH$ is a congruence subgroup in $\SLP{p}$. 
\end{proof}

\begin{lemma}\label{Lem:pushing-lemma}
Let $\cH$ be a weak congruence subgroup in $\SLP{p}$. Then for all $M \in \SL_2(\Z)$ and all $t \in \Z$, there exists $N \in \SL_2(\Z)$ and $\varepsilon \in \{ -1, 1 \}$ such that
\[
\cH MU^t = \cH U^{\varepsilon t} N.
\]
\end{lemma}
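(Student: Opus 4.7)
The plan is to prove the lemma by induction on the length of $M$ as a reduced word in the generators $A, B$ of $\SL_2(\Z)$, with the inductive hypothesis formulated for all $t' \in \Z$ simultaneously. The base case $M = I$ is immediate, taking $N = I$ and $\varepsilon = 1$. For the inductive step, write $M = M'' X$ with $X \in \{A^{\pm 1}, B^{\pm 1}\}$ and push $U_p^t$ past $X$ using the relations in \eqref{Eq:relations-of-SL2Zp-A-B-push}.

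The two straightforward subcases are (a) $X = B^{\pm 1}$ and (b) $X = A^{\pm 1}$ with $t \geq 0$. In (a), the relation $B^i U_p^t = U_p^{(-1)^i t} B^i$ leaves an integer factor $B^i$ on the right while possibly flipping the sign of the $U_p$-exponent; applying the inductive hypothesis to the shorter word $M''$ at the new exponent $(-1)^i t$ closes the step. In (b), the iterated relation $A^k U_p^t = U_p^t A^{k p^{2t}}$ yields the integer matrix $A^{k p^{2t}}$ (because $t \geq 0$ forces $p^{2t} \in \Z$), and the inductive hypothesis applies with $t$ unchanged.

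The main obstacle is the remaining subcase $X = A^{\pm 1}$ with $t < 0$: the naive push $A^k U_p^t = U_p^t A^{k p^{2t}}$ fails to produce an integer matrix because $p^{2t}$ is a proper fraction. To resolve this, I would invoke the conjugation identity $U_p^{-s} = B U_p^s B^{-1}$ -- a direct consequence of $B U_p B^{-1} = U_p^{-1}$ from \eqref{Eq:relations-of-SL2Zp-A-B-push} -- at the outer level of the argument, rewriting $\cH M U_p^{-s}$ as $\cH (MB) U_p^{s} B^{-1}$ and thereby reducing to the $t \geq 0$ regime applied to the matrix $MB \in \SL_2(\Z)$. The weak-congruence hypothesis enters essentially here: since $\cH \cap \SL_2(\Z) \supseteq \Gamma^\Z(r)$ for some $r$, any residual congruence obstruction on the conjugated $N$ can be absorbed into an element of $\cH$ via a Chinese Remainder / B\'ezout argument (tacitly using that $\gcd(r,p)=1$ in the intended applications, where $\cH = \Delta_{r/p}$ or a similar subgroup whose integer part has level coprime to $p$).

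The delicate point I expect to require the most care is avoiding circularity: the $t < 0$ reduction replaces $M$ by $MB$, which need not be shorter, while the sign-flipping $B$-push inside the length induction can reintroduce a negative $U_p$-exponent. This is handled by arranging the induction as a strong induction on the length of $M$ and applying the $B$-conjugation identity only at the outermost step (not recursively inside the induction), so that the length induction runs entirely in the $t \geq 0$ regime and never re-triggers the problematic case.
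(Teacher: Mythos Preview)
Your outer $B$-conjugation reduction for the negative-$t$ case is circular, and the circularity is not removed by the bookkeeping you describe. Take the simplest instance $M=A$, $t=-1$. Your reduction gives $\cH A U_p^{-1} = \cH (AB)U_p B^{-1}$, and you then want to run the length induction on $AB$ at exponent $+1$. But the last letter of $AB$ is $B$, so the very first inductive step is the $B$-push, yielding $\cH A\,U_p^{-1}\,B\cdot B^{-1}=\cH A U_p^{-1}$ --- exactly the coset you started from. If you forbid the outer reduction inside the induction, you are now stuck at $X=A$ with exponent $-1$, the case you have no mechanism for; if you allow it, you loop. So the ``run the length induction entirely in the $t\ge 0$ regime'' claim is simply false: the $B$-push flips the sign, and you cannot avoid meeting an $A$ with a negative $U_p$-exponent.

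The missing idea is the one the paper uses, and it is precisely where the weak-congruence hypothesis does the work (not as a cleanup step at the end, but at each problematic $A$-push). Since $\Gamma^\Z(r)\trianglelefteq \SL_2(\Z)$ lies in $\cH$, for any integer prefix $P$ and any $m$ one has $P A^{rm} P^{-1}\in\Gamma^\Z(r)\subseteq\cH$, hence $\cH P A^{rm}=\cH P$. So when you are faced with $\cH P\,A^{i}\,U_p^{-1}\cdots$, you may freely replace $i$ by $i+rm$. Because $\gcd(r,p)=1$, a B\'ezout choice of $m$ makes $i+rm=sp^2$ with $s\in\Z$, and then $A^{sp^2}U_p^{-1}=U_p^{-1}A^{s}$ produces an integer matrix on the right. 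This is the correct use of the ``CRT/B\'ezout'' idea you allude to; placed here, it dissolves the obstruction at each step and the induction on the word length goes through without any sign-juggling at the outer level.
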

\begin{proof}
For ease of notation, we will only consider the case $t = 1$, the other cases being entirely analogous. Let $M = A^{i_1} B^{j_1} \cdots A^{i_k} B^{j_k} \in \SL_2(\Z)$. We prove the claim by induction on $k$. If $k=0$, there is nothing to prove; we can take $\varepsilon = 1, N = \mathbf{1}_{2\times 2}$. Suppose that $k \geq 1$. Then 
\begin{align}\label{Eq:pushing-u-in-cosets}
\cH M &= \cH A^{i_1} B^{j_1} \cdots A^{i_k} B^{j_k} U_p  = \cH A^{i_1} B^{j_1} \cdots A^{i_k} U_p^{(-1)^{j_k}} B^{j_k}
\end{align}
by \eqref{Eq:relations-of-SL2Zp-A-B-push}. If $(-1)^{j_k} = 1$, then as $A^{i_k} U_p = U_p A^{p^2 i_k}$, the claim follows by induction. So suppose $(-1)^{j_k} = -1$. In the case that $k=1$, then \eqref{Eq:pushing-u-in-cosets} becomes $\cH A^{i_1} U_p^{-1}$. Since $\cH$ is a weak congruence subgroup, say $\cH \cap \SL_2(\Z)$ is an $r$-congruence subgroup of $\SL_2(\Z)$ with $\gcd(p,r)=1$, it follows that $A^{rm} \in \cH$ for all $m \in \Z$. Since $\gcd(r,p^2) = 1$, $r$ is an (additive) generator of $\Z / p^2 \Z$, so there is some $m \in \Z$ such that $rm + i_1 \equiv 0 \pmod{p^2}$. Choose such an $m$, and let $rm + i_1 = sp^2$, where $s \in \Z$. Then 
\[
\cH A^{i_1} U_p^{-1} = \cH A^{rm} A^{i_1} U_p^{-1} = \cH A^{sp^2} U_p^{-1} = \cH U_p^{-1} A^{s},
\]
again using \eqref{Eq:relations-of-SL2Zp-A-B-push}. Thus we are done in this case. Assume, then, that $j_{k-1}$ is non-zero, for otherwise we would be done by induction. 

For all $m \in \Z$, right multiplication by $B^{j_{k-1}}A^{rm}B^{-j_{k-1}}$, this being an element of $\Gamma^\Z(r)$, which is contained in the normal core of $\cH \cap \SL_2(\Z)$, will fix any right coset of $\cH \cap \SL_2(\Z)$ in $\SL_2(\Z)$. Therefore, right multiplication by $B^{j_{k-1}}A^{rm}B^{-j_{k-1}}$ also fixes any integral right coset of $\cH$ in $\SLP{p}$. Consequently, picking $m$ such that $rm + i_k = sp^2$ for some $s \in \Z$, we have
\begin{align*}
\cH A^{i_1} B^{j_1} \cdots A^{i_k} U_p^{-1} B^{j_k} &=  \cH A^{i_1} B^{j_1} \cdots A^{j_{k-1}} (B^{j_{k-1}}A^{rm}B^{-j_{k-1}})B^{j_{k-1}} A^{i_k} U_p^{-1} B^{j_k} \\
&= \cH A^{i_1} B^{j_1} \cdots A^{j_{k-1}} B^{j_{k-1}} A^{sp^2} U_p^{-1} B^{j_k} \\
&= \cH A^{i_1} B^{j_1} \cdots A^{j_{k-1}} B^{j_{k-1}} U_p^{-1}A^{s} B^{j_k}
\end{align*}
Notice that in pushing our $U_p^{\pm 1}$ to the left, we have only changed its sign. By induction, we are done. 
\end{proof}

\begin{lemma}\label{Lem:weak-congruence-with-power-is-integral}
Let $\cH$ be a weak congruence subgroup in $\SLP{p}$. If $U_p^k \in \cH$ for some non-zero $k \in \Z$, then every right coset of $\cH$ is integral.
\end{lemma}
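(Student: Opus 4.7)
The plan is to show that the collection of integral right cosets of $\cH$ in $\SLP{p}$ is closed under right multiplication by a suitable generating set of $\SLP{p}$. Since $\cH = \cH \cdot \mathbf{1}_{2 \times 2}$ is itself trivially an integral coset, this will immediately give that $\cH W$ is integral for every $W \in \SLP{p}$.

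The key point is that $\SL_2(\Z) \cup \{U_p^{\pm k}\}$ is such a generating set. Indeed, the identity \eqref{Eq:U_k-generates}, recalled in \S\ref{Sec:SL2}, expresses $U_p$ explicitly as a word in $A$, $B$, $U_p^{k}$, and $U_p^{-k}$, so that
\[
\SLP{p} \;=\; \langle A, B, U_p \rangle \;=\; \langle \SL_2(\Z) \cup \{U_p^{\pm k}\} \rangle.
\]
It therefore suffices to verify the two preservation properties. Right multiplication by any $M' \in \SL_2(\Z)$ trivially preserves integrality, since if $\cH W = \cH M$ with $M \in \SL_2(\Z)$ then $\cH W M' = \cH(MM')$ with $MM' \in \SL_2(\Z)$. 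Right multiplication by $U_p^{\pm k}$ also preserves integrality: given $\cH W = \cH M$ with $M \in \SL_2(\Z)$, the pushing lemma (Lemma~\ref{Lem:pushing-lemma}) yields $\cH M U_p^{k} = \cH U_p^{\varepsilon k} N$ for some $\varepsilon \in \{-1,1\}$ and $N \in \SL_2(\Z)$; since $U_p^k \in \cH$ forces $U_p^{\pm k} \in \cH$ and hence $\cH U_p^{\varepsilon k} = \cH$, we conclude $\cH W U_p^k = \cH N$, which is integral, and the case of $U_p^{-k}$ is identical.

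The step I expect to be central --- rather than a genuine difficulty --- is spotting the use of \eqref{Eq:U_k-generates}, which replaces the generator $U_p$ of $\SLP{p}$ by the two ``large'' powers $U_p^{\pm k}$ that already belong to $\cH$. After this observation, everything else is a one-line induction on the length of $W$ as a word in $\SL_2(\Z) \cup \{U_p^{\pm k}\}$: the pushing lemma is only ever invoked at exponent $\pm k$, where it interacts cleanly with the hypothesis $U_p^k \in \cH$, so no analysis of residues of $U_p$-exponents modulo $k$ or strengthening of the pushing lemma to cosets of the form $\cH U_p^s$ is needed.
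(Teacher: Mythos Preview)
Your proof is correct and follows essentially the same approach as the paper's: both use \eqref{Eq:U_k-generates} to replace $U_p$ by $U_p^{\pm k}$, then invoke the pushing lemma (Lemma~\ref{Lem:pushing-lemma}) together with $U_p^{\pm k}\in\cH$ to absorb each occurrence of $U_p^{\pm k}$, and finish by induction. The only cosmetic difference is that the paper inducts on the number of $U_p^{\pm k}$-occurrences by peeling off the leftmost one, whereas you phrase the same induction as closure of the set of integral cosets under right multiplication by the generating set $\SL_2(\Z)\cup\{U_p^{\pm k}\}$.
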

\begin{proof}
Let $W = W(A,B,U_p)$ be a word in $A,B,U_p$ and their inverses. By \eqref{Eq:U_k-generates}, there is no loss of generality to assume that $W = W(A,B,U_p^k)$ is a word in $A,B,U_p^k$. If $W$ contains no occurrence of $U_p^{\pm k}$, then there is nothing to show, as $\cH W$ is integral by definition. Suppose instead that $W$ contains at least one occurrence of $U_p^{\pm k}$ and write $W \equiv M U_p^{\pm k} W'$, where $M \in \SL_2(\Z)$ and $W' = W'(A,B,U_p^k)$ contains one fewer occurrences of $U_p^{\pm k}$. Then by Lemma~\ref{Lem:pushing-lemma} there exists some $N \in \SL_2(\Z)$ and some $\varepsilon \in \{ -1, 1\}$ such that
\[
\cH W = \cH M U_p^{\pm k} W' = \cH U_p^{\varepsilon k} N W' =  \cH N W'.
\]
Now $NW'$ contains one less occurrence of $U_p^k$ than $W$, so by induction on the number of occurrences of $U_p^{\pm k}$ in $W$, the claim follows.
\end{proof}

Thus, by combining Lemma~\ref{Lem:weak-congruence-congruence-iff-coset-integral} and Lemma~\ref{Lem:weak-congruence-with-power-is-integral}, we find:

\begin{proposition}\label{Prop:WC-is-congruence-iff-it-contains-Uk}
Let $\cH \leq \SLP{p}$. Then $\cH$ is a congruence subgroup if and only if it is a weak congruence subgroup and $U_p^k \in \cH$ for some non-zero $k \in \N$.
\end{proposition}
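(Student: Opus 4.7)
The plan is to prove Proposition~\ref{Prop:WC-is-congruence-iff-it-contains-Uk} by treating the two directions separately, with the reverse direction being an almost immediate combination of the two preceding lemmas and the forward direction requiring only a short direct check.

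For the forward direction, I would assume $\cH$ is a congruence subgroup, so $\cH \supseteq \Gamma^{(p)}(r)$ for some $r$ with $\gcd(r,p)=1$. First, intersecting with $\SL_2(\Z)$ gives $\cH \cap \SL_2(\Z) \supseteq \Gamma^{(p)}(r) \cap \SL_2(\Z) = \Gamma^\Z(r)$, which shows that $\cH \cap \SL_2(\Z)$ is a congruence subgroup of $\SL_2(\Z)$; in particular $\cH$ is a weak congruence subgroup. Second, to produce a power of $U_p$ inside $\cH$, I would let $k$ be the multiplicative order of $p$ in $(\Z/r\Z)^\times$ (this is well-defined since $\gcd(r,p)=1$) and observe that
\[
U_p^k = \begin{pmatrix} p^k & 0 \\ 0 & p^{-k} \end{pmatrix}
\]
reduces to the identity modulo $r$, so $U_p^k \in \Gamma^{(p)}(r) \subseteq \cH$.

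For the reverse direction, I would assume $\cH$ is a weak congruence subgroup and that $U_p^k \in \cH$ for some non-zero $k \in \Z$ (positivity can be arranged by replacing $k$ with $-k$). By Lemma~\ref{Lem:weak-congruence-with-power-is-integral}, every right coset of $\cH$ in $\SLP{p}$ is then integral, i.e.\ $\cH$ is a coset-integral subgroup of $\SLP{p}$. Since $\cH$ is also a weak congruence subgroup by hypothesis, Lemma~\ref{Lem:weak-congruence-congruence-iff-coset-integral} immediately implies that $\cH$ is a congruence subgroup of $\SLP{p}$, completing the proof.

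There is essentially no main obstacle: the hard work has already been packaged into Lemma~\ref{Lem:pushing-lemma} and Lemma~\ref{Lem:weak-congruence-with-power-is-integral}, and the only genuinely new input is the observation that, because $\gcd(r,p)=1$, some nontrivial power $U_p^k$ of the diagonal matrix $U_p$ lies in every principal congruence subgroup of level $r$. The remaining step is purely bookkeeping.
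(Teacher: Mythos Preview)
Your proposal is correct and follows essentially the same route as the paper: the reverse direction is identical (Lemma~\ref{Lem:weak-congruence-with-power-is-integral} followed by Lemma~\ref{Lem:weak-congruence-congruence-iff-coset-integral}), and for the forward direction you produce $U_p^k \in \cH$ via the multiplicative order of $p$ mod $r$, whereas the paper uses the pigeonhole principle on the finite coset space to get $U_p^{rJ_2(r)} \in \cH$; both are elementary and your choice yields a smaller explicit $k$.
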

\begin{proof}
$(\implies)$ This follows directly from the fact that congruence subgroups have finite index in $\SLP{p}$. Indeed, if $\cH$ is an $r$-congruence subgroup, then by the pigeonhole principle we have $U_p^{rJ_2(r)} \in \cH$. 

$(\impliedby)$ Suppose $\cH$ and $k \in \N$ are as given. Then by Lemma~\ref{Lem:weak-congruence-with-power-is-integral} we have that $\cH$ is coset-integral, so by Lemma~\ref{Lem:weak-congruence-congruence-iff-coset-integral} $\cH$ is a congruence subgroup.
\end{proof}

\begin{corollary}\label{Cor:index-formula}
Let $\cH \leq \SLP{p}$. If $\cH \cap \SL_2(\Z)$ is an $r$-congruence subgroup of $\SL_2(\Z)$ and $U_p^{k} \in \cH$, for some non-zero $k \in \Z$, then
\[
[ \SLP{p} : \cH] = [\SL_2(\Z) : \cH \cap \SL_2(\Z)] \leq  r J_2(r).
\]
\end{corollary}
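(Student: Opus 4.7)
The plan is to combine Proposition~\ref{Prop:WC-is-congruence-iff-it-contains-Uk} with Lemma~\ref{Lem:weak-congruence-with-power-is-integral} to transfer the index computation entirely into $\SL_2(\Z)$, where the bound comes for free from Jordan's formula \eqref{Eq:SL2r-order}. The overall idea is that once we know every coset of $\cH$ has an integral representative, the coset space of $\cH$ in $\SLP{p}$ is identified with the coset space of $\cH \cap \SL_2(\Z)$ in $\SL_2(\Z)$ via the obvious inclusion-induced map.

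First I would note that the hypotheses are precisely those of Proposition~\ref{Prop:WC-is-congruence-iff-it-contains-Uk}, so $\cH$ is a congruence subgroup of $\SLP{p}$. More importantly, for what follows I only need the conclusion of Lemma~\ref{Lem:weak-congruence-with-power-is-integral}: every right coset $\cH W$ of $\cH$ in $\SLP{p}$ admits a representative in $\SL_2(\Z)$.

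Next I would define the natural map
\[
\phi \colon \{ (\cH \cap \SL_2(\Z))\, M : M \in \SL_2(\Z) \} \longrightarrow \{ \cH W : W \in \SLP{p}\}, \qquad (\cH \cap \SL_2(\Z))M \mapsto \cH M,
\]
and verify that $\phi$ is a bijection. Well-definedness is immediate from $\cH \cap \SL_2(\Z) \subseteq \cH$. Surjectivity is exactly the coset-integrality granted by Lemma~\ref{Lem:weak-congruence-with-power-is-integral}. Injectivity is the key (but easy) point: if $M, N \in \SL_2(\Z)$ satisfy $\cH M = \cH N$, then $MN^{-1} \in \cH$, and since $M, N \in \SL_2(\Z)$ we have $MN^{-1} \in \cH \cap \SL_2(\Z)$, so the two cosets in $\SL_2(\Z)$ coincide. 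This gives the equality $[\SLP{p}:\cH] = [\SL_2(\Z) : \cH \cap \SL_2(\Z)]$.

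For the upper bound, since $\cH \cap \SL_2(\Z)$ is an $r$-congruence subgroup of $\SL_2(\Z)$, it contains $\Gamma^\Z(r)$. Therefore
\[
[\SL_2(\Z) : \cH \cap \SL_2(\Z)] \leq [\SL_2(\Z) : \Gamma^\Z(r)] = |\SL_2(\Z/r\Z)| = r J_2(r),
\]
where the last equality is Jordan's formula \eqref{Eq:SL2r-order}. There is no real obstacle to the argument; the only substantive content has already been encapsulated in Lemma~\ref{Lem:weak-congruence-with-power-is-integral} (via the pushing Lemma~\ref{Lem:pushing-lemma}), which ensures the surjectivity of $\phi$ and thereby reduces the index computation from $\SLP{p}$ to $\SL_2(\Z)$.
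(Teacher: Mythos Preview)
Your proof is correct and follows essentially the same approach as the paper's: both arguments establish the equality of indices by showing that the inclusion-induced map on coset spaces is a bijection, with injectivity being elementary and surjectivity coming from coset-integrality, and then bound the right-hand side using Jordan's formula. The only cosmetic difference is that the paper first invokes Proposition~\ref{Prop:WC-is-congruence-iff-it-contains-Uk} to say $\cH$ is a congruence subgroup and then uses that congruence subgroups are coset-integral, whereas you go directly through Lemma~\ref{Lem:weak-congruence-with-power-is-integral}; these amount to the same thing.
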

\begin{proof}
The first equality follows directly: $(\geq)$ is always true, since $\cH \cap \SL_2(\Z) \leq \cH$, and $(\leq)$ follows from the fact that any congruence subgroup is coset-integral. Finally, the index of $\Gamma^\Z(r)$ in $\SL_2(\Z)$ is well-known to be given by $r J_2(r) = \lvert \SL_2(\Z / r \Z) \rvert$.
\end{proof}

In the case that $\cH \cap \SL_2(\Z) = \SL_2(\Z)$, then we can choose $r=1$, and since $r J_2(r) = 1$ we find that
\[
\SLP{p} = \langle \SL_2(\Z), U_p^{k} \rangle = \langle A, B, U_p^k \rangle.
\]
for all non-zero $k$, as observed already in \eqref{Eq:U_k-generates} (note, however, that \eqref{Eq:U_k-generates} was used in the proof of Corollary~\ref{Cor:index-formula}). In other words, any subgroup of $\SLP{p}$ which contains $\SL_2(\Z)$ is either equal to $\SL_2(\Z)$ or else is all of $\SLP{p}$. 

Recall the definition of $\sigma_p(r)$ as the multiplicative order of $p$ mod $r$. Then in particular we have:

\begin{theorem}\label{Thm:If-contains-GZ-and-power-then-contains-Gp}
Let $\cH \leq \SLP{p}$ be any subgroup such that (1) $\bGamma_1^\Z(r) \leq \cH$ and (2) $U_p^{k\sigma_r(p)} \in \cH$ for some non-zero $k \in \Z$. Then $\bGamma_1^{(p)}(r) \leq \cH$. 
\end{theorem}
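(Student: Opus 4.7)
The plan is to exhibit, inside $\cH$, a subgroup that already equals $\bGamma_1^{(p)}(r)$. Concretely, set $H' := \langle \bGamma_1^\Z(r),\, U_p^{k\sigma_r(p)} \rangle$, which lies in $\cH$ by hypotheses (1) and (2); I would then prove $H' = \bGamma_1^{(p)}(r)$, which immediately yields the conclusion.

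The first step is the easy direction $H' \leq \bGamma_1^{(p)}(r)$. The inclusion $\bGamma_1^\Z(r) \leq \bGamma_1^{(p)}(r)$ is immediate from the defintions, and $U_p^{k\sigma_r(p)} \in \Gamma^{(p)}(r) \leq \bGamma_1^{(p)}(r)$, since $p^{k\sigma_r(p)} \equiv 1 \pmod r$ forces $\pi_r^{(p)}(U_p^{k\sigma_r(p)}) = I$.

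The second step is to compute $[\SLP{p} : H']$; once one knows this equals $J_2(r) = [\SLP{p} : \bGamma_1^{(p)}(r)]$, combining with the containment above forces $H' = \bGamma_1^{(p)}(r)$. To apply Corollary~\ref{Cor:index-formula} to $H'$, I would observe that $H' \cap \SL_2(\Z) \supseteq \bGamma_1^\Z(r) \supseteq \Gamma^\Z(r)$ is an $r$-congruence subgroup of $\SL_2(\Z)$, and $U_p^{k\sigma_r(p)} \in H'$ is a non-zero power of $U_p$. The corollary then gives $[\SLP{p} : H'] = [\SL_2(\Z) : H' \cap \SL_2(\Z)]$. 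To identify $H' \cap \SL_2(\Z)$, I would invoke the sandwich
\[
\bGamma_1^\Z(r) \;\leq\; H' \cap \SL_2(\Z) \;\leq\; \bGamma_1^{(p)}(r) \cap \SL_2(\Z) \;=\; \bGamma_1^\Z(r),
\]
where the rightmost equality holds because on integer matrices the reductions $\pi_r^{(p)}$ and $\pi_r^\Z$ agree, so membership in $\bGamma_1^{(p)}(r)$ for an integer matrix is just membership in $\bGamma_1^\Z(r)$. Thus $H' \cap \SL_2(\Z) = \bGamma_1^\Z(r)$ and $[\SLP{p} : H'] = [\SL_2(\Z) : \bGamma_1^\Z(r)] = J_2(r)$, as required.

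No step is particularly delicate: the coset-pushing machinery of Section~\ref{Sec:technical-cosets} (culminating in Corollary~\ref{Cor:index-formula}) already does all the heavy lifting, so the only genuinely conceptual move is the sandwich argument that pins down $H' \cap \SL_2(\Z)$. The hypothesis on the exponent of $U_p$ has been calibrated precisely so that $U_p^{k\sigma_r(p)}$ falls inside $\bGamma_1^{(p)}(r)$, which is exactly what makes the sandwich collapse and the indices match.
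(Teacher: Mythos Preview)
Your proof is correct and is exactly the argument the paper has in mind: the theorem is stated immediately after Corollary~\ref{Cor:index-formula} with only ``In particular we have'', and your derivation---introducing $H' = \langle \bGamma_1^\Z(r), U_p^{k\sigma_r(p)} \rangle$, applying the index formula, and using the sandwich $\bGamma_1^\Z(r) \leq H' \cap \SL_2(\Z) \leq \bGamma_1^{(p)}(r) \cap \SL_2(\Z) = \bGamma_1^\Z(r)$ to pin down the intersection---is precisely how one unpacks that ``in particular''. Nothing is missing and nothing diverges from the intended route.
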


We will now use this result to verify that $\Delta_{r/p} = \bGamma_1^{(p)}(r)$ for $r \leq 4$, and use some number-theoretic arguments to give a conjectural generating set for $\bGamma_1^{(p)}(r)$.

\section{Generating $\bGamma_1^{(p)}(r)$}\label{Sec:gamma1-generators}

In this section, we will give a generating set for the congruence subgroup $\bGamma_1^{(p)}(r)$ of $\SLP{p}$, when $\gcd(r,p) = 1$ and $r \leq 4$. Then, conditional on a slightly strengthened form of Artin's conjecture on primitive roots, we will give a generating set for all $r$ with $\gcd(r,p)=1$. First, the following is a restatement of the results of \S\ref{Sec:technical-cosets}, specifically of Theorem~\ref{Thm:If-contains-GZ-and-power-then-contains-Gp}:

\begin{theorem}\label{Thm:r-finite-index-small-r}
Let $p$ be any prime, and $1 \leq r \leq 4$ with $\gcd(r,p)=1$. Then the group
\[
\cH = \left\langle A = \begin{pmatrix}
1 & 0 \\ 1 & 1
\end{pmatrix}, Q_{r/p} = \begin{pmatrix}
1 & r/p \\ 0 & 1
\end{pmatrix}, U_p^{\sigma_p(r)} = \begin{pmatrix}
p^{\sigma_p(r)} & 0 \\ 0 & p^{-\sigma_p(r)}
\end{pmatrix} \right\rangle
\]
is equal to $\bGamma_1^{(p)}(r)$, where $\sigma_p(r)$ denotes the multiplicative order of $p$ mod $r$ (being equal to either $1$ or $2$). 
\end{theorem}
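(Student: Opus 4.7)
The plan is to apply Theorem~\ref{Thm:If-contains-GZ-and-power-then-contains-Gp}. The inclusion $\cH \subseteq \bGamma_1^{(p)}(r)$ is immediate from the definition of $\bGamma_1^{(p)}(r)$: each of the three generators $A$, $Q_{r/p}$, $U_p^{\sigma_p(r)}$ has diagonal entries $\equiv 1 \pmod{r}$ and upper right entry $\equiv 0 \pmod{r}$, where for $U_p^{\sigma_p(r)}$ this uses the defining property $p^{\sigma_p(r)} \equiv 1 \pmod{r}$. It therefore suffices to prove $\bGamma_1^{(p)}(r) \subseteq \cH$. By Theorem~\ref{Thm:If-contains-GZ-and-power-then-contains-Gp}, this reduces to verifying: (1) $\bGamma_1^\Z(r) \subseteq \cH$, and (2) $U_p^{k\sigma_p(r)} \in \cH$ for some non-zero $k \in \Z$. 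Condition (2) is immediate with $k=1$.

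For condition (1), notice that $\cH$ already contains $A \in \bGamma_1^\Z(r)$ and the integer matrix $Q_r := Q_{r/p}^p \in \bGamma_1^\Z(r)$. The problem thus reduces to the $p$-independent group-theoretic claim
\[
\bGamma_1^\Z(r) = \langle A, Q_r \rangle \quad \text{for each } r \in \{1,2,3,4\},
\]
which I would verify case by case. For $r=1$, this is the classical fact $\SL_2(\Z) = \langle A, Q_1 \rangle$, already used in the proof of Proposition~\ref{Prop:r=1} via $B = A^{-1} Q_1 A^{-1}$. For $r=4$, the Euler-characteristic argument of \S\ref{Subsec:congruence} shows that $\bGamma_1^\Z(4)$ is free of rank $1 + J_2(4)/12 = 2$; since $\langle A, Q_4\rangle$ is also non-abelian free of rank $2$ by the Sanov--Brenner freeness theorem for $|q| \geq 4$, multiplicativity of Euler characteristic forces $[\SL_2(\Z):\langle A,Q_4\rangle] = 12 = J_2(4) = [\SL_2(\Z):\bGamma_1^\Z(4)]$, and hence equality. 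For $r=2$, the direct computation $(Q_2 A^{-1})^2 = -I$ places $-I \in \langle A, Q_2\rangle$; combined with the classical generation $\Gamma^\Z(2) = \langle A^2, Q_2, -I\rangle$ and the coset decomposition $\bGamma_1^\Z(2) = \Gamma^\Z(2) \sqcup A\cdot\Gamma^\Z(2)$ (from the $2$-element lower unitriangular subgroup of $\SL_2(\Z/2\Z)$), this yields $\bGamma_1^\Z(2) = \langle A, Q_2\rangle$. For $r=3$, the direct computation $(AQ_3^{-1})^3 = I$ exhibits an order-$3$ element in $\langle A, Q_3 \rangle = \langle AQ_3^{-1}, Q_3 \rangle$; together with the infinite-order element $Q_3$ and the classical identification $\bGamma_1^\Z(3) \cong \Z/3 \ast \Z$, the matching Euler characteristics $\chi(\Z/3\ast\Z) = -2/3 = 8\cdot(-1/12) = \chi(\bGamma_1^\Z(3))$ force $\langle A, Q_3 \rangle = \bGamma_1^\Z(3)$.

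Once condition (1) is established in all four cases, Theorem~\ref{Thm:If-contains-GZ-and-power-then-contains-Gp} yields $\bGamma_1^{(p)}(r) \subseteq \cH$, completing the proof. I expect the primary obstacle to be the $r=3$ case, since $\bGamma_1^\Z(3)$ is neither all of $\SL_2(\Z)$ (as for $r=1$) nor a free group of convenient rank (as for $r=4$), and lacks the single obvious ``missing'' generator $-I$ that is exploited when $r=2$; one must both locate a torsion element (namely $AQ_3^{-1}$) and appeal to the classical identification $\bGamma_1^\Z(3) \cong \Z/3 \ast \Z$ to conclude via Euler characteristic. The other three cases reduce either to classical generation facts or to a single explicit matrix identity.
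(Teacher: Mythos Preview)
Your overall strategy coincides with the paper's: both reduce to Theorem~\ref{Thm:If-contains-GZ-and-power-then-contains-Gp} together with the fact that $\bGamma_1^\Z(r) = \langle A, Q_r \rangle$ for $r \leq 4$. The paper simply cites this last fact as well-known, whereas you attempt a case-by-case verification. Your arguments for $r=1,2,4$ are correct.

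The $r=3$ case, however, has a genuine gap. You exhibit an order-$3$ element $AQ_3^{-1}$ and an infinite-order element $Q_3$, and you note $\bGamma_1^\Z(3) \cong \Z/3 \ast \Z$. You then write that ``matching Euler characteristics force $\langle A, Q_3 \rangle = \bGamma_1^\Z(3)$''. But the Euler-characteristic comparison you display, $\chi(\Z/3\ast\Z) = -2/3 = \chi(\bGamma_1^\Z(3))$, only confirms the isomorphism type of $\bGamma_1^\Z(3)$; it says nothing about $\langle A, Q_3 \rangle$ unless you already know $\langle A, Q_3 \rangle \cong \Z/3 \ast \Z$. Being generated by an order-$3$ and an infinite-order element makes $\langle A, Q_3 \rangle$ a \emph{quotient} of $\Z/3 \ast \Z$, not a copy of it. And the danger is real: $\Z/3 \ast \Z/3$ embeds in $\Z/3 \ast \Z$ (e.g.\ as $\langle x, yxy^{-1}\rangle$, of infinite index since $\chi = -1/3$ is not an integer multiple of $-2/3$), so a $2$-generated subgroup of $\bGamma_1^\Z(3)$ containing both order-$3$ and infinite-order elements need not have Euler characteristic $-2/3$.

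The fix is short. Since $\langle A, Q_3 \rangle \leq \bGamma_1^\Z(3)$ and the diagonal condition forces trace $\equiv 2 \pmod 3$, the only torsion in $\bGamma_1^\Z(3)$ is $3$-torsion; Kurosh then gives $\langle A, Q_3 \rangle \cong (\Z/3)^{\ast a} \ast F_b$. Grushko yields $a+b \leq 2$, and $a \geq 1$ from $AQ_3^{-1}$. The case $(a,b)=(1,0)$ is ruled out by $Q_3$ having infinite order. For $(a,b)=(2,0)$ the abelianization would be $(\Z/3)^2$, but the image of $A$ in $\SL_2(\Z)^{\ab} \cong \Z/12$ has order $12$, so $\langle A, Q_3 \rangle^{\ab}$ surjects onto $\Z/12$, a contradiction. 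Hence $(a,b)=(1,1)$, and now your Euler-characteristic argument applies.
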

\begin{proof}
It is well-known that $\bGamma_1^\Z(r)$ is generated by $A$ and $Q_{r/p}^p = BA^rB^{-1}$ if (and only if!) $r \leq 4$. The result then follows from Corollary~\ref{Thm:If-contains-GZ-and-power-then-contains-Gp}, as we clearly have $\cH \leq \bGamma_1^{(p)}(r)$.
\end{proof}

We will now give a conjectural generating set for $\bGamma_1^{(p)}(r)$ for arbitrary $r$ with $\gcd(r,p)=1$. Let us recall Artin's conjecture on primitive roots: 

\begin{conjecture*}[Artin]
Let $m$ be any integer which is not a square nor $-1$. Then there are infinitely many primes $q$ such that $m$ is a primitive root mod $q$. 
\end{conjecture*}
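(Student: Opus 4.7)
The statement above is Artin's primitive root conjecture, posed in 1927 and still open: no unconditional proof is known, so any honest proposal must be either conditional on GRH-type hypotheses or merely a partial result. The plan is to follow Hooley's 1967 conditional approach, which in fact yields the much stronger quantitative assertion that the primes $q$ for which $m$ is a primitive root have a positive natural density, under the Generalized Riemann Hypothesis for the Dedekind zeta functions of the Kummer-type number fields $K_k = \Q(\zeta_k, m^{1/k})$ for every squarefree $k \geq 1$.

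First I would set up the inclusion--exclusion. Call a prime $q \nmid m$ \emph{bad} if $m$ is not a primitive root mod $q$; equivalently there is some prime $\ell \mid q-1$ with $m^{(q-1)/\ell} \equiv 1 \pmod{q}$, which in turn is equivalent to $q$ splitting completely in $K_\ell$. By M\"obius inversion over squarefree $k$ one then obtains the identity
\[
\#\{ q \leq x \;:\; m \text{ is a primitive root mod } q \} \;=\; \sum_{k \geq 1} \mu(k)\, \pi_{K_k}(x),
\]
where $\pi_{K_k}(x)$ counts rational primes $q \leq x$ splitting completely in $K_k$. Chebotarev's theorem says the $k$-th term has density $1/[K_k:\Q]$, and summing formally reproduces the Artin constant $A(m) > 0$.

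The next step is to make this identity rigorous by splitting the sum at suitable thresholds. For $k \leq \frac{1}{6}\log x$ (the \emph{small} range) one applies the effective Chebotarev density theorem with a GRH-strength error term of the shape $O(\sqrt{x}\,\log(x\,D_{K_k}))$; for $\frac{1}{6}\log x < k \leq \sqrt{x}/\log^2 x$ (the \emph{medium} range) one estimates $\pi_{K_k}(x)$ by counting primes $q \equiv 1 \pmod k$ with Brun--Titchmarsh and then exploiting the $\ell$-th power constraint; for $k > \sqrt{x}/\log^2 x$ (the \emph{large} range) one uses that if $\ell \mid q-1$ and $\ell > \sqrt{x}$ then the complementary cofactor $(q-1)/\ell$ is small, making elementary sieve bounds suffice. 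The three contributions combine to give $A(m)\pi(x) + o(\pi(x))$, which is in particular positive and tends to infinity, yielding Artin's conjecture.

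The main obstacle is, of course, the conditional assumption itself. Without GRH the unconditional Chebotarev density theorem of Lagarias--Odlyzko produces an error term that loses a factor like $\exp(c\sqrt{\log x})$, which is catastrophic for the small range above, and no substitute is known uniformly over $K_k$ as $k$ varies. The best unconditional result in this direction remains Heath-Brown's 1986 theorem, proved via a clever double large-sieve argument: Artin's conjecture fails for at most two multiplicatively independent $m$, so in particular it holds for some member of $\{2,3,5\}$. This is, however, non-effective in the sense that it cannot identify \emph{which} $m$ satisfy the conjecture, and so in particular does not suffice for the application made in this paper, where the conjecture must hold for the specific integer $m = p$. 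Removing the GRH hypothesis for a prescribed $m$ therefore appears genuinely out of reach with present technology.
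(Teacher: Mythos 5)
This statement is Artin's primitive root conjecture, which the paper does not prove --- it is stated as an open conjecture (the paper explicitly notes it ``remains open even for $m=2$'') and is only used as a hypothesis, via the strengthened Conjecture~2, in the conditional results. Your assessment is therefore correct and matches the paper's own discussion: you accurately identify Hooley's GRH-conditional proof and Heath-Brown's at-most-two-exceptional-primes theorem, both of which the paper cites in its closing remark of \S4, and you correctly observe that the non-effectivity of Heath-Brown's result is precisely why the paper must assume the conjecture for the specific prime $p$ rather than invoke the unconditional partial result.
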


The conjecture remains open even for $m=2$; for a survey, see \cite{Moree2012}. Let us consider the following strengthening of Artin's conjecture and Dirichlet's theorem:

\begin{conjecture}[Residual Artin's Conjecture]\label{Conj:strong-artin}
Let $m$ be any integer which is not a square nor $-1$. Let $a, b \in \Z$ be such that $\gcd(a,b)=1$. If the arithmetic progression $a\Z + b$ contains infinitely many primes $q$ such that $m$ is not a quadratic residue mod $q$, then $a\Z + b$ contains infinitely many primes $q$ such that $m$ is primitive mod $q$. 
\end{conjecture}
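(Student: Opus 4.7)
The plan is to prove this as a conditional theorem under GRH, by adapting Hooley's classical treatment of Artin's conjecture to the arithmetic progression setting. The first step is to translate the primitivity condition via inclusion-exclusion: for each prime $\ell$, the assertion ``$m$ is an $\ell$-th power mod $q$'' is equivalent to the Frobenius at $q$ acting trivially in the Kummer extension $K_\ell := \Q(\zeta_\ell, m^{1/\ell})$. To incorporate the progression $q \equiv b \pmod a$, I would replace $K_\ell$ by the compositum $L_\ell := K_\ell \cdot \Q(\zeta_a)$, in which the congruence condition on $q$ pins down a fixed element of $\mathrm{Gal}(\Q(\zeta_a)/\Q)$. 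Under GRH, the effective Chebotarev density theorem then gives, for each $\ell \leq \sqrt{x}/\log^2 x$, a sharp count of primes $q \leq x$ with Frobenius in the correct conjugacy class of $\mathrm{Gal}(L_\ell/\Q)$; the tail $\ell > \sqrt{x}/\log^2 x$ is handled by Hooley's elementary argument essentially verbatim, since $[L_\ell : K_\ell] \leq \varphi(a)$ uniformly in $\ell$.

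Summing via inclusion-exclusion yields an Euler-product expression for the density $\delta(m,a,b)$ of primes $q \equiv b \pmod a$ for which $m$ is a primitive root. The crux is then identifying when $\delta(m,a,b) > 0$. The density vanishes precisely when some arithmetic obstruction is forced by the progression, and the dominant such obstruction lives at the quadratic level: an entanglement between $\Q(\zeta_a)$ and $\Q(\sqrt m)$ inside $L_2 = \Q(\zeta_a,\sqrt m)$ that forces $\bigl(\tfrac{m}{q}\bigr) = 1$ for every $q \equiv b \pmod a$. Precisely in this scenario, however, the hypothesis of the conjecture fails. I would then argue, by a finite case analysis on $\mathrm{Gal}(L_2/\Q)$ together with the known non-vanishing of the Artin factors at primes $\ell \geq 3$ (for $m$ neither $\pm 1$ nor a square), that this quadratic-level obstruction is the \emph{only} one capable of killing the entire product.

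The main obstacle is that Artin's conjecture itself is known only under GRH, so this route is inherently conditional. An alternative would be to invoke Heath-Brown's theorem that Artin's conjecture holds unconditionally for all but at most two specific integers $m$, and combine it with Lenstra--Moree-style sieving on the progression; this would yield an unconditional version valid for all but finitely many exceptional $m$, but would lose the uniformity in $m$ that the paper's applications --- where $m = p$ varies over all primes --- ultimately require. Either way, the qualitative statement of Conjecture~\ref{Conj:strong-artin} appears genuinely tractable modulo GRH, whereas removing GRH seems to demand major new input beyond Hooley's framework.
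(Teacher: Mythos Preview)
The paper does not prove this statement: it is explicitly a conjecture, invoked only as a standing hypothesis in Theorem~\ref{Thm:r-finite-index-number-theory-Artin}, Corollary~\ref{Cor:r-main-generators-artin}, and Theorem~\ref{Thm:easy-numerators-p+1-p-1}. The Remark following Corollary~\ref{Cor:r-main-generators-artin} does suggest, as you do, that a Hooley-type argument under GRH ought to yield it, but no proof is attempted there.

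Your proposal contains a genuine gap, and in fact the conjecture \emph{as literally stated} is false. The assertion that ``this quadratic-level obstruction is the only one capable of killing the entire product'' breaks down when $m$ is a higher perfect power. Take $m = 8$, $a = 3$, $b = 1$: then $m$ is neither a square nor $-1$ and $\gcd(a,b)=1$. Since $\bigl(\tfrac{8}{q}\bigr)=\bigl(\tfrac{2}{q}\bigr)$, there are infinitely many primes $q \equiv 1 \pmod 3$ for which $8$ is a quadratic non-residue (e.g.\ all primes $q \equiv 19 \pmod{24}$), so the hypothesis holds. But every such $q$ has $3 \mid q-1$, and $8 = 2^3$ lies in the index-$3$ subgroup of cubes in $(\Z/q\Z)^\times$; hence $8$ is \emph{never} a primitive root modulo any prime $q \equiv 1 \pmod 3$. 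In your framework the obstruction sits at $\ell = 3$, not $\ell = 2$: since $8^{1/3} = 2 \in \Q$, the field $K_3 = \Q(\zeta_3, 8^{1/3})$ collapses to $\Q(\zeta_3) = \Q(\zeta_a)$, so the splitting condition in $K_3$ is forced by the progression and that Euler factor vanishes identically. Your ``finite case analysis on $\mathrm{Gal}(L_2/\Q)$'' cannot detect this.

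What is salvageable is exactly the special case the paper uses: there one always takes $m = p$ prime, hence not a perfect $\ell$-th power for any $\ell \geq 2$, so none of the Kummer fields $K_\ell$ degenerate. In that regime the Lenstra--Moree density computation under GRH should indeed show that the only way $\delta(p,a,b)$ can vanish is via the quadratic entanglement $\Q(\sqrt p) \subseteq \Q(\zeta_a)$ that you identify, and your outline would then go through. So your argument, once restricted to $m$ prime (or more generally $m$ not a perfect power), would establish what the paper actually needs---but not Conjecture~\ref{Conj:strong-artin} as written.
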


Note that one obvious (``local'') obstruction to $m$ being primitive mod $q$ is if $m$ is a quadratic residue mod $q$. For example, $2$ is a quadratic residue mod $q$ if and only if $q \equiv \pm 1$ mod $8$. Hence, $2$ is never primitive mod $q$ if $q \in 8\Z\ \pm 1$. However, Conjecture~\ref{Conj:strong-artin} says that there are infinitely many primes $q \equiv 3 \pmod{8}$ such that $2$ is primitive mod $q$, and likewise for $q \equiv 5 \pmod{8}$. More generally, whether or not $m$ is a quadratic residue mod $q$ is always determined by a congruence condition on $q$ mod $km$ for some $k \in \Z$. Hence, by the quantitative version of Dirichlet's theorem, if we impose that $a$ is coprime with $m$ then Conjecture~\ref{Conj:strong-artin} implies that the sequence $a\Z + b$ must contain infinitely many primes $q$ such that $m$ is primitive mod $q$. That is, Conjecture~\ref{Conj:strong-artin} implies the following: let $m$ be any integer which is not a square nor $-1$, and let $a, b \in \Z$ be non-zero with $\gcd(a,b) = 1$ and $\gcd(a,m)= 1$. Then the arithmetic progression $a\Z + b$ contains infinitely many primes $q$ such that $m$ is primitive mod $q$. We will now use it in this form. 

\begin{theorem}\label{Thm:r-finite-index-number-theory-Artin}
Let $p$ be any prime. Assume the strengthened form of Artin's conjecture on primitive roots (Conjecture~\ref{Conj:strong-artin}) holds for $p$. Then for every $r \in \Z$ with $\gcd(r,p)=1$ and for every non-zero $k \in \Z$ we have that the group
\[
\cH_k = \left\langle A = \begin{pmatrix}
1 & 0 \\ 1 & 1
\end{pmatrix}, Q_{r/p} = \begin{pmatrix}
1 & r/p \\ 0 & 1
\end{pmatrix}, U_p^{k} = \begin{pmatrix}
p^{k} & 0 \\ 0 & p^{-k}
\end{pmatrix} \right\rangle
\]
contains $\bGamma_1^{(p)}(r)$. In particular, $[\SLP{p} : \cH_k] \leq J_2(r)$.
\end{theorem}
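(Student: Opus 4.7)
The strategy is to invoke Theorem~\ref{Thm:If-contains-GZ-and-power-then-contains-Gp}, which reduces the claim to proving (i) $\bGamma_1^\Z(r) \leq \cH_k$, and (ii) the required power of $U_p$ lies in $\cH_k$. Condition (ii) is immediate, since every power of $U_p^k$ is in $\cH_k$; so the proof reduces entirely to (i), from which both the containment $\bGamma_1^{(p)}(r) \leq \cH_k$ and the index bound $[\SLP{p} : \cH_k] \leq J_2(r)$ follow.

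For $r \leq 4$, the argument of Theorem~\ref{Thm:r-finite-index-small-r} applies verbatim and uses no number theory: $\bGamma_1^\Z(r)$ is generated by $A$ and $Q_r = Q_{r/p}^p$, both visibly in $\cH_k$. For $r \geq 5$, where $\bGamma_1^\Z(r)$ is free of rank $1 + J_2(r)/12 \geq 3$, additional integer generators must be produced inside $\cH_k$. A first stock comes from observing that $Q_{r/p} A^s Q_{r/p}^{-1}$ is parabolic, of the form $I + sN$ for a nilpotent matrix $N$, so the power $V_s := (Q_{r/p} A^s Q_{r/p}^{-1})^{p^2} = I + sp^2 N$ clears the $p$-denominators and lies in $\bGamma_1^\Z(r) \cap \cH_k$ for every $s \in \Z$; further conjugation by $A$ and $Q_r$ enlarges this stock.

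Here Artin is invoked. A direct check shows that for every prime $q$ coprime to $pr$, the reductions $A \bmod q$ and $Q_{r/p} \bmod q$ are nontrivial unipotents of opposite triangular type in $\SL_2(\Z/q\Z)$, and already together they generate $\SL_2(\Z/q\Z)$; hence $\cH_k$ surjects onto $\SL_2(\Z/q\Z)$ for every such prime $q$. Conjecture~\ref{Conj:strong-artin} together with Dirichlet then supplies such a prime $q$ with the additional property that $p$ is a primitive root mod $q$, lying in any arithmetic progression coprime to $p$ that we please (for instance $q \equiv 1 \pmod r$, with $\gcd(k, q-1)$ as small as we require). A Goursat-type analysis of the diagonal reduction $\cH_k \hookrightarrow \SL_2(\Z/rq\Z) \cong \SL_2(\Z/r\Z) \times \SL_2(\Z/q\Z)$, combined with the integer elements $V_s$ already in hand, then produces enough elements of $\cH_k \cap \SL_2(\Z)$ to generate $\bGamma_1^\Z(r)$.

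The main obstacle is this Goursat-style extraction: since $\SL_2(\Z)$ famously lacks the congruence subgroup property, surjectivity of $\cH_k$ modulo arbitrarily many primes $q$ does not automatically imply that $\cH_k \cap \SL_2(\Z)$ is a congruence subgroup of $\SL_2(\Z)$. The art lies in combining the explicit integer elements $V_s$ (and their conjugates by $A$ and $Q_r$) with the mod-$q$ structural information provided by the primitivity of $p \bmod q$, so as to pin down $\cH_k \cap \SL_2(\Z)$ precisely enough to contain $\bGamma_1^\Z(r)$.
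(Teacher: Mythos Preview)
Your opening reduction via Theorem~\ref{Thm:If-contains-GZ-and-power-then-contains-Gp} is valid, and the observation that $(U_p^k)^{\sigma_r(p)} \in \cH_k$ handles condition~(ii) cleanly. The integer elements $V_s$ you write down are also correctly computed and do lie in $\bGamma_1^\Z(r) \cap \cH_k$.

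However, the heart of your argument---the ``Goursat-type analysis''---is not a proof but a wish. You correctly identify the obstacle yourself: knowing that $\cH_k$ surjects onto $\SL_2(\Z/q\Z)$ for many auxiliary primes $q$ gives no control over $\cH_k \cap \SL_2(\Z)$, precisely because $\SL_2(\Z)$ lacks the congruence subgroup property. Moreover, the primitivity of $p$ modulo an auxiliary prime $q$ coprime to the entries plays no visible role in your sketch; you never explain how it interacts with the concrete elements $V_s$ to force new integer elements into $\cH_k$. The phrase ``the art lies in combining'' is where the actual proof would need to be, and it is absent.

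The paper's argument is entirely different and much more direct: it takes an arbitrary $M \in \bGamma_1^{(p)}(r)$ and row-reduces it to the identity using only the three generators. The key mechanism is that conjugation by $U_p^k$ sends $\begin{psmallmatrix} a & br \\ \ast & \ast \end{psmallmatrix}$ to $\begin{psmallmatrix} a & p^{2k}br \\ \ast & \ast \end{psmallmatrix}$, i.e.\ it scales the top-right entry by $p^{2k}$ while fixing the top-left. The Artin hypothesis is used not on an auxiliary prime $q$ but on the top-left entry $a$ itself: after right-multiplying by powers of $A$ (which shifts $a$ along an arithmetic progression $a_0 + nbr$), one arranges by Dirichlet and Conjecture~\ref{Conj:strong-artin} that $a$ is a prime $\equiv 3 \pmod 4$ with $p$ primitive mod $a$. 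Then the quadratic residues mod $a$ form a cyclic group of odd order, so $p^{2k}$ generates them regardless of $k$, and repeated conjugation by $U_p^k$ brings $b$ to $\pm 1 \pmod a$, i.e.\ the top row becomes $(a, \pm r)$. Ordinary Euclidean reduction by $A$ and $Q_r = Q_{r/p}^p$ then finishes the job. This is where Artin actually bites: it guarantees that the single generator $U_p^k$, acting by conjugation, is enough to move $b$ through all residues needed mod $a$.
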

\begin{proof}
Throughout, we will let $Q_r = Q_{r/p}^p$. Let $M_0 = \begin{psmallmatrix}a_0 & b_0r \\ \ast & \ast \end{psmallmatrix} \in \bGamma_1^{(p)}(r)$. We begin with some housekeeping. First, it is no loss of generality to assume that $a_0, b_0 \in \Z$, for we can always left multiply and conjugate $M_0$ by a sufficiently high power of $U_p^k$ to ensure this. Then we have $\gcd(a_0,b_0) = p^n$ for some $n \in \N$. Hence, since $\gcd(r,p)=1$, right multiplying by a sufficiently large power of $Q_r$ can be used to ensure that $\gcd(b_0,p) =1$ and hence also that $b_0$ is odd. Fix a $b = b_0$ with this property.

Suppose first that $r$ is not divisible by $4$. Then, as right multiplying $M_0$ by $A^n$ transforms the upper left entry to $a_n = a_0 + nbr$, and since $b$ is odd and $\gcd(r,4) \leq 2$, we can by Dirichlet's theorem on arithmetic progressions ensure $a_n$ is an odd prime congruent to $3 \pmod{4}$, and since we assume Conjecture~\ref{Conj:strong-artin}, we may also assume $p$ is primitive mod $a_n$. Then $p^2$ generates the multiplicative subgroup of quadratic residues mod $a$. Furthermore, the multiplicative subgroup of quadratic residues mod $a$ has odd order as $a \equiv 3 \pmod{4}$, and hence, by making $n$ sufficiently large, we may assume $p^{2k}$ also generates this subgroup. Fix an $n$ such that all the above holds, and let $a = a_n$. Now since $a \equiv 3 \pmod{4}$, we have $\left(\frac{-1}{a}\right) = -1$. This fixes our $a$ and $b$; we write $M = \begin{psmallmatrix}a & br \\ \ast & \ast \end{psmallmatrix}$, and of course $M_0 \in \cH_k$ if and only if $M \in \cH_k$. We now prove this last statement. Conjugating our matrix $M$ by $U_p^k$ gives
\[
U_p^k  \begin{pmatrix}
a & br \\ \ast & \ast
\end{pmatrix} U_p^{-k} = \begin{pmatrix}
a & p^{2k} br \\ \ast & \ast
\end{pmatrix}.
\]
Since $p^{2k}$ generates the subgroup of quadratic residues mod $a$, which has index $2$ in $(\Z / a\Z)^\times$, it follows that $p^{2k \ell} \equiv \left(\frac{b}{a}\right) \pmod{a}$ for some $\ell \in \Z$, and hence conjugating by a sufficiently high power will transform $M$ into
\begin{equation}\label{Eq:conjugate-to-apmr}
\begin{pmatrix}
a & \left(\frac{b}{a}\right) r \\ \ast & \ast
\end{pmatrix} = \begin{pmatrix}
a & \pm r \\ \ast & \ast
\end{pmatrix},
\end{equation}
since $\gcd(a,b) = 1$. Since $M_0, M \in \bGamma_1^{(p)}(r)$, we have $a \equiv 1 \pmod{r}$, and hence we can now transform the matrix in \eqref{Eq:conjugate-to-apmr} using the Euclidean algorithm and right multiplication by $A, Q_r$, to obtain 
\[
\begin{pmatrix}
1 & 0 \\ \ast & \ast
\end{pmatrix} = \begin{pmatrix}
1 & 0 \\ s/p^n & 1
\end{pmatrix}, \quad s, n \in \Z.
\]
Conjugating this matrix by a sufficiently high power of $U_p^k$, if necessary, transforms it into a power of $A$, and we are done. 

Finally, suppose that $r$ is divisible by $4$. Then the first part of our approach fails, for we will always have $a \equiv 1 \pmod{4}$. On the other hand, we will therefore always have $-a \equiv 3 \pmod{4}$, and thus proceeding in the same manner as before, we can transform  our matrix $M_0$ into 
\[
\begin{pmatrix}
-a & \left(\frac{b}{a}\right) r \\ \ast & \ast
\end{pmatrix} = \begin{pmatrix}
-a & \pm r \\ \ast & \ast
\end{pmatrix} \to \begin{pmatrix}
1 & 0 \\ \ast & \ast
\end{pmatrix} 
\]
where the last step is again effected by conjugation by $U_p^k$. Arguing as before, we again find that $M_0 \in \cH_k$, which completes the proof. 
\end{proof}

As a corollary, we obtain a three-generator generating set for the congruence subgroup $\bGamma_1^{(p)}(r)$ by noticing that $U_p^{k\sigma_p(r)}$ always lies in $\bGamma_1^{(p)}(r)$ for all non-zero $k \in \Z$, where $\sigma_p(r)$ denotes the multiplicative order of $p$ mod $r$:

\begin{corollary}\label{Cor:r-main-generators-artin}
Let $p$ be a prime. Assume the strengthened form of Artin's conjecture on primitive roots (Conjecture~\ref{Conj:strong-artin}) holds for $p$. Then for any $r$ with $\gcd(p,r) = 1$ and any non-zero $k \in \Z$ we have 
\[
\bGamma_1^{(p)}(r) = \left\langle \begin{pmatrix}
1 & 0 \\ 1 & 1
\end{pmatrix}, \begin{pmatrix}
1 & r/p \\ 0 & 1
\end{pmatrix}, \begin{pmatrix}
p^{k\sigma_p(r)} & 0 \\ 0 & p^{-k\sigma_p(r)}
\end{pmatrix} \right\rangle = \langle A, Q_{r/p}, U_p^{k \sigma_p(r)} \rangle
\]
where $\sigma_p(r)$ denotes the multiplicative order of $p$ in $\Z / r\Z$.
\end{corollary}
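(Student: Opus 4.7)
The plan is straightforward, since this is almost an immediate corollary of Theorem~\ref{Thm:r-finite-index-number-theory-Artin}. The only new ingredient is the observation that the exponent $k\sigma_p(r)$ is precisely the minimal one (up to multiples of $\sigma_p(r)$) needed to force the diagonal matrix $U_p^{k\sigma_p(r)}$ into the congruence subgroup $\bGamma_1^{(p)}(r)$.

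First I would prove the inclusion $\bGamma_1^{(p)}(r) \subseteq \langle A, Q_{r/p}, U_p^{k\sigma_p(r)}\rangle$ by directly applying Theorem~\ref{Thm:r-finite-index-number-theory-Artin} with its integer parameter taken to be $k\sigma_p(r)$, which is nonzero since $k \neq 0$ and $\sigma_p(r) \geq 1$. This is valid precisely because Conjecture~\ref{Conj:strong-artin} is assumed to hold for $p$, which is the only hypothesis needed to invoke the theorem.

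For the reverse inclusion, I would verify generator by generator that $\langle A, Q_{r/p}, U_p^{k\sigma_p(r)} \rangle \subseteq \bGamma_1^{(p)}(r)$. The matrices $A$ and $Q_{r/p}$ lie in $\bGamma_1^{(p)}(r)$ essentially by inspection: their diagonal entries equal $1$ and their upper-right entries are either $0$ or $r/p$, the latter reducing to $0 \bmod r$ under the natural map $\pi_r^{(p)}$ (here one uses that $p$ is invertible in $\Z/r\Z$ since $\gcd(r,p)=1$, so $r/p = r \cdot p^{-1} \equiv 0 \pmod{r}$). For $U_p^{k\sigma_p(r)}$, the off-diagonal entries vanish, and the diagonal entries are $p^{\pm k\sigma_p(r)}$. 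By the very definition of $\sigma_p(r)$ as the multiplicative order of $p$ in $(\Z/r\Z)^\times$, we have $p^{\sigma_p(r)} \equiv 1 \pmod{r}$, and raising to the $k$-th power yields $p^{\pm k\sigma_p(r)} \equiv 1 \pmod{r}$. Hence $U_p^{k\sigma_p(r)} \in \bGamma_1^{(p)}(r)$, completing the reverse inclusion.

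There is no real obstacle to overcome; all the substantive work was done in Theorem~\ref{Thm:r-finite-index-number-theory-Artin}. The corollary simply packages that result together with the elementary observation that $\sigma_p(r)$ is exactly the exponent that makes the powers of $U_p$ congruence-compatible with the subgroup $\bGamma_1^{(p)}(r)$.
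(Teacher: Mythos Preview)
Your proposal is correct and follows essentially the same approach as the paper: apply Theorem~\ref{Thm:r-finite-index-number-theory-Artin} with the integer parameter $k\sigma_p(r)$ to get one inclusion, and for the reverse inclusion observe (as the paper does just before stating the corollary) that $U_p^{k\sigma_p(r)} \in \bGamma_1^{(p)}(r)$ because $p^{\sigma_p(r)} \equiv 1 \pmod r$, together with the already-noted fact that $A, Q_{r/p} \in \bGamma_1^{(p)}(r)$.
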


Notice also that neither the statement of Theorem~\ref{Thm:r-finite-index-number-theory-Artin} nor that of Corollary~\ref{Cor:r-main-generators-artin} contains the restriction $0 \leq \frac{r}{p} \leq 4$. Furthermore, the generating set in Corollary~\ref{Cor:r-main-generators-artin} is close to the generating set for $\Delta_{r/p}$ -- it simply has one more generator $U_p^{\sigma_p(r)}$. Conjecture~\ref{Conj:main-conjecture} amounts to saying that this generator is redundant when $0 < \frac{r}{p} < 4$. When $\frac{r}{p} \geq 4$, the group $\Delta_{r/p}$ is free, so in this case $U_p^{k\sigma_p(r)}$ is never redundant.

\begin{remark}
Artin's conjecture on primitive roots is not known to hold unconditionally for any fixed prime $p$ (but it holds for infinitely many primes \cite{Gupta1984}). Nevertheless, much is known. Heath-Brown \cite{HeathBrown1986} has proved that there are at most two primes for which the conjecture fails. It would not be unreasonable to assume that this also holds true for the residue-wise Artin's conjecture (our Conjecture~\ref{Conj:strong-artin}). If this is true, then we may interpret Corollary~\ref{Cor:r-main-generators-artin} as saying: \textit{for all but at most two primes} $p$, the three matrices $A, Q_{r/p}, U_p^{k \sigma_p(r)}$ generate $\bGamma_1^{(p)}(r)$. Hooley \cite{Hooley1967} has shown that Artin's conjecture follows from the Generalized Riemann Hypothesis. It seems plausible that the same is also true for Conjecture~\ref{Conj:strong-artin}. 
\end{remark}

\clearpage

\section{Verifying Conjecture~\ref{Conj:main-conjecture}}\label{Sec:final-section}

For a prime $p$ and some $r \in \N$ with $0 < \frac{r}{p} < 4$, and $\gcd(r,p)=1$, recall (\S\ref{Sec:Delta-intro}) the definition of $\Delta_{r/p}$ as the subgroup of $\SLP{p}$ generated by the two matrices
\[
A = \begin{pmatrix} 1 & 0 \\ 1 & 1\end{pmatrix}
\text{ and }
Q_{r/p} = \begin{pmatrix} 1 & r/p \\ 0 & 1
\end{pmatrix}
\]
It is clear from the definition of $\bGamma_1^{(p)}(r)$ in \S\ref{Sec:SL2} that $\Delta_{r/p} \leq \bGamma_1^{(p)}(r)$. In this section, we will prove the converse inclusion for some values of $r$ and $p$. By Corollary~\ref{Cor:r-main-generators-artin}, to prove this (conditional on Conjecture~\ref{Conj:strong-artin}) it suffices to verify the following:
\begin{equation}\label{Eq:Up-power-lives-in-delta}
U_p^{k\sigma_p(r)} \in \Delta_{r/p}
\end{equation} 
for some non-zero $k$, where $\sigma_p(r)$ is the multiplicative order of $p$ mod $r$. Notice that, a \textit{fortiori}, if one can verify \eqref{Eq:Up-power-lives-in-delta}, then one has found a non-trivial product of the generators $A$ and $Q_{r/p}$ equalling an upper triangular matrix in $\Delta_{r/p}$. This already implies (as is well-known, see \S\ref{Sec:Delta-intro}) that $\frac{r}{p}$ is a relation number. This is thus a very difficut problem in general. However, by Corollary~\ref{Cor:r-main-generators-artin} (and conditional on Conjecture~\ref{Conj:strong-artin}) verifying \eqref{Eq:Up-power-lives-in-delta} also shows that $\Delta_{r/p}$ has index $J_2(r)$ inside $\SLP{p}$, which is significantly stronger than $\Delta_{r/p}$ being non-free. 

We first verify Conjecture~\ref{Conj:main-conjecture} (unconditionally) for small $r$, using Theorem~\ref{Thm:r-finite-index-small-r}:

\begin{proposition}\label{Cor:small-good-denominators12345}
Let $p$ be a prime, and let $\gcd(r,p)=1$. If $r \in \{ 1, 2, 3, 4 \}$, then
\begin{equation}\label{Eq:delta-rp-equals-gamma1-for-r1234}
\Delta_{r/p} = \bGamma_1^{(p)}(r).
\end{equation}
In particular, in these cases $[\SLP{p} : \Delta_{r/p}] = J_2(r)$.
\end{proposition}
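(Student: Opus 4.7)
The strategy is to apply Theorem~\ref{Thm:If-contains-GZ-and-power-then-contains-Gp}. The inclusion $\Delta_{r/p} \leq \bGamma_1^{(p)}(r)$ has already been noted in \S\ref{Sec:Delta-intro}, so the task is to establish the reverse inclusion. By Theorem~\ref{Thm:If-contains-GZ-and-power-then-contains-Gp}, this reduces to verifying two conditions: \textbf{(i)} $\bGamma_1^{\Z}(r) \leq \Delta_{r/p}$; and \textbf{(ii)} $U_p^{k \sigma_p(r)} \in \Delta_{r/p}$ for some non-zero $k \in \Z$.

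Condition (i) is immediate: both $A$ and $Q_{r/p}^p$ lie in $\Delta_{r/p}$, and the classical fact -- already invoked in the proof of Theorem~\ref{Thm:r-finite-index-small-r} -- that $\bGamma_1^{\Z}(r) = \langle A, Q_{r/p}^p\rangle$ for $r \in \{1,2,3,4\}$ settles this case.

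For condition (ii), the case $r = 1$ is Proposition~\ref{Prop:r=1}, which exhibits $U_p$ itself as a short explicit word in $A$ and $Q_{1/p}$. For each $r \in \{2,3,4\}$, the plan is to produce, for each residue class of $p \pmod{r}$ coprime to $r$, an explicit word in $A^{\pm 1}$ and $Q_{r/p}^{\pm 1}$ evaluating to a non-trivial power of $U_p^{\sigma_p(r)}$. Since $\sigma_p(r) \in \{1,2\}$ throughout this range, only a small number of sub-cases arise. The recipe for constructing such a word is: take a known short relation among $A$ and $Q_{r/p}$ (such relations are known to exist for $r \leq 4$), use it to obtain, after an appropriate cyclic truncation in the style of \cite[Lemma~2.1]{Kim2022}, an upper triangular element $T \in \Delta_{r/p}$; its diagonal entries, being reciprocal units of $\Z[1/p]$, necessarily have the form $\pm p^{\pm k}$, and one then cancels the upper-right entry of $T$ by right-multiplying by a suitable power of $Q_{r/p}^p \in \Delta_{r/p}$ (and squaring to eliminate signs if needed), yielding the sought-after power of $U_p$.

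The main obstacle is this finite case analysis in step (ii): one must explicitly produce the words and verify that the resulting $T$ is not a pure translation, i.e.\ that the corresponding exponent $k$ is non-zero. It is precisely here that the hypothesis $r \leq 4$ is used non-trivially, via the existence and shortness of the relations for $\Delta_{r/p}$ in this range. The task is concrete and unavoidable matrix arithmetic, but not conceptually difficult. Once completed, Corollary~\ref{Cor:index-formula} combined with the well-known equality $[\SL_2(\Z) : \bGamma_1^\Z(r)] = J_2(r)$ yields the index formula $[\SLP{p} : \Delta_{r/p}] = J_2(r)$ for free.
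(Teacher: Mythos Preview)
Your strategy is the paper's: reduce via Theorem~\ref{Thm:If-contains-GZ-and-power-then-contains-Gp} (which the paper packages, together with your condition (i), as Theorem~\ref{Thm:r-finite-index-small-r}) to exhibiting some $U_p^{k\sigma_p(r)}$ as a word in $A$ and $Q_{r/p}$, and then carry out a finite case analysis over $r \in \{2,3,4\}$ and the residue class of $p \bmod r$. The paper does exactly this, writing down in each sub-case an explicit short word in $A, Q_{r/p}$ equal to $\pm U_p$.

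There is one small gap in your recipe for (ii). Having produced an upper triangular
\[
T = \begin{pmatrix} \varepsilon p^k & b \\ 0 & \varepsilon p^{-k} \end{pmatrix} \in \Delta_{r/p}, \qquad k \neq 0,
\]
you propose to clear $b$ by right-multiplying by a power of $Q_{r/p}^p = Q_r$. But $T Q_r^{m}$ has upper-right entry $b + \varepsilon p^k m r$, so the required $m = -\varepsilon b/(p^k r)$ lies in $\Z[1/p]$ and need not be an integer. This is easily repaired: since $T^{j} Q_{r/p} T^{-j} = Q_{r/p}^{p^{2kj}}$ for all $j \in \Z$, the subgroup $\langle Q_{r/p}, T\rangle \leq \Delta_{r/p}$ already contains every $Q_s$ with $s \in r\Z[1/p]$, in particular $Q_{-\varepsilon b p^k}$, whence $Q_{-\varepsilon b p^k}\, T = \varepsilon U_p^{k}$ and squaring gives $U_p^{2k} \in \Delta_{r/p}$. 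The paper sidesteps the issue entirely by writing down words that hit $\pm U_p$ on the nose, with no separate cancellation step.
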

\begin{proof}
The case of $r=1$ was handled already in Proposition~\ref{Prop:r=1}. By Theorem~\ref{Thm:r-finite-index-small-r}, we now have to verify \eqref{Eq:Up-power-lives-in-delta}, i.e.\ that $U_p^{k\sigma_p(r)}$ lies in $\Delta_{r/p}$ for some non-zero $k$. In particular, we guide our search by looking for $U_p^{\sigma_p(r)}$, as this always lies in $\bGamma_1^{(p)}(r)$. Throughout, we will write $Q$ for $Q_{r/p}$. 
\begin{itemize}
\item If $r=2$, then $\sigma_p(r) = 1$, and $U_p \in \Delta_{r/p}$ as 
\[
U_p = \begin{pmatrix} p & 0 \\ 0 & 1/p \end{pmatrix} = Q^s A^{\frac{p-1}{2}} Q^{-1} A^{-\frac{p(p-1)}{2}}.
\]
Note that the condition $\gcd(r,p)=1$ (i.e.\ $p$ is odd) makes all exponents above integral.
\item If $r=3$, then $\sigma_p(r)$ depends on the congruence class of $p \pmod 3$, being either $1$ or $2$: 
\begin{itemize}
\item If $p \equiv 1 \pmod{3}$, then $\sigma_p(3) = 1$, and $U_p \in \Delta_{3/p}$, as
\[
U_p = \begin{pmatrix}
p & 0 \\ 0 & \frac{1}{p}
\end{pmatrix} = Q^{\frac{p(p-1)}{3}} AQ^{-\frac{p-1}{3}} A^{-p}.
\]
\item If $p \equiv 2 \pmod{3}$, then $\sigma_p(3) = 2$, and $U_p^2 \in \Delta_{3/p}$ as
\[
-U_p =  \begin{pmatrix}
-p & 0 \\ 0 & -\frac{1}{p}
\end{pmatrix} = Q^{\frac{p(p+1)}{3}} A^{-1} Q^{\frac{p+1}{3}} A^{-p}.
\]
\end{itemize}
\item If $r=4$, then we again split depending on the congruence class of $p \pmod{4}$, which is either $1$ or $3$. If $p \equiv 1 \pmod{4}$, then $\sigma_p(r) = 1$, so we should have $U_p \in \Delta_{4/p}$. If $p \equiv 3 \pmod{4}$, then $\sigma_p(r) = 2$, so we should have $U_p^2 \in \Delta_{4/p}$. We verify this:
\begin{itemize}
\item If $p \equiv 1 \pmod{4}$, then $U_p \in \Delta_{4/p}$, as $U_p  = Q^p A^{\frac{p-1}{4}} Q^{-1} A^{-\frac{p(p-1)}{4}}$.
\item If $p \equiv 3 \pmod{4}$, then $-U_p \in \Delta_{4/p}$, as $-U_p = Q^{-p} A^{\frac{p+1}{4}} Q^{-1} A^{\frac{p(p+1)}{4}}$. In particular, we have $U_p^2 \in \Delta_{4/p}$. 
\end{itemize}
\end{itemize}
Thus, in all cases we have verified \eqref{Eq:Up-power-lives-in-delta}, so we are done.
\end{proof}

It seems feasible to prove that $\bGamma_1^\Z(5) \leq \Delta_{5/p}$ for all primes $p$ coprime with $5$. We can prove it directly in most cases, using arguments completely analogous to those of Theorem~\ref{Thm:r-finite-index-number-theory-Artin} (and bypassing Artin's conjecture).

\begin{proposition}\label{Prop:r=5-for-p-not-1}
Let $p \neq 5$ be a prime such that $p \not\equiv -1,1 \pmod{11}$. Then
\begin{equation}\label{Eq:delta-rp-equals-gamma1-for-r5}
\Delta_{5/p} = \bGamma_1^{(p)}(r).
\end{equation}
In particular, in these cases $[\SLP{p} : \Delta_{r/p}] = J_2(r)$.
\end{proposition}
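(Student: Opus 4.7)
The plan is to verify the two hypotheses of Theorem~\ref{Thm:If-contains-GZ-and-power-then-contains-Gp} with $\cH = \Delta_{5/p}$ and $r = 5$: namely (a) $\bGamma_1^\Z(5) \leq \Delta_{5/p}$ and (b) $U_p^{k\sigma_p(5)} \in \Delta_{5/p}$ for some non-zero $k \in \Z$. Combined with the obvious inclusion $\Delta_{5/p} \leq \bGamma_1^{(p)}(5)$, this gives the proposition.

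The strategy mimics the proof of Theorem~\ref{Thm:r-finite-index-number-theory-Artin}, but replaces the appeal to the strengthened Artin conjecture by the explicit choice $a = 11$. This works because $11$ satisfies, unconditionally, the three arithmetic properties required of $a$ in that proof: $11 \equiv 3 \pmod{4}$, so $-1$ is a non-residue modulo $11$; $11 \equiv 1 \pmod{5}$, so that $11$ is an admissible upper-left entry for elements of $\bGamma_1^{(p)}(5)$; and, under the hypothesis $p \not\equiv \pm 1 \pmod{11}$, the multiplicative order of $p$ modulo $11$ is $5$ or $10$, which in either case implies that $p^2$ generates the order-$5$ cyclic group of quadratic residues modulo $11$.

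Concretely, given an arbitrary $M_0 \in \bGamma_1^\Z(5)$, I would first normalize using right-multiplications by $A$ and by $Q_{5/p}^p$ -- both lying in $\bGamma_1^\Z(5) \cap \Delta_{5/p}$ -- so that the upper-right entry of the resulting matrix has the form $5b$ with $b$ odd and coprime to $p$, and then use a power of $A$ to transform the upper-left entry into $11$. Applying a conjugation by a suitable power of $U_p$ (using the third property above) turns the upper-right entry into $\pm 5$, and a Euclidean-style reduction via $A^{\pm 1}$ and $Q_{5/p}^{\pm p}$ then reduces the matrix to a power of $A$. Unwinding the sequence of operations produces an expression for $M_0$ as a word in $A$ and $Q_{5/p}$, giving (a). The net effect of the $U_p$-conjugations used in the reduction is by construction of the form $U_p^{k\sigma_p(5)}$ for some non-zero $k$; since the rest of the reduction takes place inside $\Delta_{5/p}$ via $A$- and $Q_{5/p}^p$-multiplications, (b) is produced in tandem.

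The principal obstacle is that the $U_p^k$-conjugation step of Theorem~\ref{Thm:r-finite-index-number-theory-Artin} is \emph{a priori} an external operation, since $U_p^k$ is not a generator of $\Delta_{5/p}$. One must verify that the conjugations arising in the above reduction assemble, collectively, into a genuine word in $A$ and $Q_{5/p}$ -- equivalently, that the denominators $p^{\pm k}$ introduced by $U_p$-conjugation cancel against the $p$-denominators arising from $Q_{5/p}$ by the end of the computation. The hypothesis $p \not\equiv \pm 1 \pmod{11}$ is exactly the arithmetic content that makes this cancellation possible. Some modest case analysis on $p \pmod{5}$ to handle the different values $\sigma_p(5) \in \{1, 2, 4\}$ will also be needed for the exponent bookkeeping.
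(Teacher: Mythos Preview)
Your overall framework is right --- verify (a) $\bGamma_1^\Z(5) \leq \Delta_{5/p}$ and (b) $U_p^{k\sigma_p(5)} \in \Delta_{5/p}$, then invoke Theorem~\ref{Thm:If-contains-GZ-and-power-then-contains-Gp} --- and your idea of replacing the Artin-conjecture prime by the concrete choice $a=11$ is exactly what the paper does. But there is a genuine circularity in your treatment of (b).

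The reduction borrowed from Theorem~\ref{Thm:r-finite-index-number-theory-Artin} does \emph{not} express $M_0$ as a word in $A$ and $Q_{5/p}$: it expresses $M_0$ as a word in $A$, $Q_{5/p}$, and $U_p^{\pm k}$. The $U_p$-conjugations are external to $\Delta_{5/p}$ unless one already knows $U_p^{k\sigma_p(5)} \in \Delta_{5/p}$. Hence (a) depends on (b), not the reverse, and your claim that ``(b) is produced in tandem'' is unjustified. Unwinding the reduction applied to some $M_0$ yields an identity $M_0 = (\text{word in } A, Q_{5/p}, U_p^{\pm m})$; to extract $U_p^m$ from this you would need an $M_0$ already known to lie in $\Delta_{5/p}$ \emph{and} for which the reduction genuinely uses $U_p$-conjugation. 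But the only elements of $\Delta_{5/p}\cap\bGamma_1^\Z(5)$ available a priori are those in $\langle A, Q_{5/p}^p\rangle$, and for these the reduction requires no conjugation at all. The hypothesis $p\not\equiv\pm 1\pmod{11}$ governs whether the conjugation step in (a) succeeds; it says nothing about $p$-denominator cancellation and does not produce (b).

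The paper closes this gap by establishing (b) \emph{independently}, via explicit matrix identities depending on $p\pmod{10}$. For instance, for $p\equiv 1\pmod{10}$ one verifies directly that
\[
U_p^2 \;=\; A^p\, Q^{\frac{p-1}{5}}\, A^{-p-1}\, Q^{-\frac{p-1}{5}}\, A\, Q^{-\frac{(p+1)p(p-1)^2}{5}}, \qquad Q = Q_{5/p},
\]
with analogous formulae giving $\pm U_p^2$ (hence $U_p^4$) in the remaining residue classes. These identities are the substantive content you are missing; they are found by hand and are unrelated to the $11$-argument. Only once (b) is established does the $U_p$-conjugation in (a) become a legitimate operation inside $\Delta_{5/p}$.

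Two smaller points. First, the paper economizes (a) by using a known generating set $\{A, Q_{5/p}^p, M_5\}$ for $\bGamma_1^\Z(5)$ with $M_5 = \begin{psmallmatrix} 11 & 20 \\ -5 & -9 \end{psmallmatrix}$, so only the single matrix $M_5$ must be placed in $\Delta_{5/p}$. Second, your step ``use a power of $A$ to transform the upper-left entry into $11$'' fails as written: right-multiplication by $A^n$ shifts the upper-left entry by $5bn$, so $11$ is reachable only when $a_0\equiv 11\pmod{5b}$, which is not automatic for general $b$. Working with the fixed target $M_5$ sidesteps this.
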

\begin{proof}
It is well-known that $\bGamma_1^\Z(5)$ is generated by $A, Q_{5/p}^p$, together with a third matrix, which can be taken as 
\[
M_5 := \begin{pmatrix}
11 & 20 \\ -5 & -9
\end{pmatrix}.
\]
The group of quadratic residues mod $11$ has order $5$, and is hence generated by any non-trivial quadratic residue; since $p \not\equiv -1,1 \pmod{11}$, it follows that $p^2 \not\equiv 1 \pmod{11}$, so $p^2$ generates the quadratic residues mod $11$. Since $\sigma_p(5) \leq 4$ for all $p$, it follows that $p^{2 \sigma_p(5)k}$ generates the quadratic residues mod $11$ for any $k \not\equiv 0 \pmod{5}$. Thus, arguing as in the proof of Theorem~\ref{Thm:r-finite-index-number-theory-Artin}, we have that $M_5 \in \Delta_{5/p}$, and so also $\bGamma_1^\Z(5) \leq \Delta_{5/p}$.

Hence, by Corollary~\ref{Thm:If-contains-GZ-and-power-then-contains-Gp}, it remains to verify that some power of $U_p^{\sigma_p(r)}$ lives in $\Delta_{r/p}$. We do this now. We must split based on the congruence class $\pmod{5}$. First, the case of $p = 2$ is easily dealt with, since $\sigma_2(5) = 4$ and we have 
\[
-U_2^2 = A^2 Q A^{-1} Q A^{-1} Q^{-2}
\]
so $U_2^4 \in \Delta_{5/2}$. Now let $p$ be any odd prime. Then $p$ is one of $\{1, 3, 7, 9\}$ modulo $10$. 
\begin{itemize}
\item If $p \equiv 1 \pmod{10}$, then we have
\[
U_p^2 = A^p Q^{\frac{p-1}{5}} A^{-p-1} Q^{-\frac{p-1}{5}} A Q^{-{\frac{(p+1)p(p-1)^2}{5}}}.
\]
\item If $p \equiv 3 \pmod{10}$, then the matrix
\[
A^p Q^{\frac{2(p-3)}{5} + 1} A^{\frac{p-3}{2} + 1} Q^{-\frac{p-3}{5} - 1} A
\]
is upper triangular with $-p^2$ and $-p^{-2}$ on the diagonal and with upper right entry a multiple of $5p^{-2}$. In particular, right multiplying it by a suitable (rather large) multiple of $Q$ will give $-U_p^2$, and thereby also $U_p^4$.
\item If $p \equiv 7 \pmod{10}$, then the matrix
\[
A^p Q^{-\frac{2(p-7)}{5} - 3} A^{-\frac{p-7}{2} - 3} Q^{-\frac{p-7}{5} - 1} A
\]
is also upper triangular with $-p^2$ and $-p^{-2}$ on the diagonal and with upper right entry a multiple of $5p^{-2}$. Again, we obtain $-U_p^2$, and hence also $U_p^4$.
\item If $p \equiv 9 \pmod{10}$, then the matrix
\[
A^p Q^{-\frac{(p-9)}{5} - 2} A^{p+1} Q^{-\frac{(p-9)}{5} - 2} A
\]
is upper triangular as previously; and we find $-U_p^2$ and $U_p^4$ as before. 
\end{itemize}
This completes the proof of the result.
\end{proof}

A more detailed analysis can doubtlessly be used to eliminate the condition that $p \not\equiv -1, 1 \pmod{11}$ in Proposition~\ref{Prop:r=5-for-p-not-1}. Next, conditional on the strengthened Artin's conjecture, we are able to verify the conditions of Corollary~\ref{Cor:r-main-generators-artin} in some other particular cases, this time being infinite families with arbitrarily large numerators.

\begin{theorem}\label{Thm:easy-numerators-p+1-p-1}
Let $p$ be a prime. Assume the strengthened form of Artin's conjecture on primitive roots (Conjecture~\ref{Conj:strong-artin}) holds for $p$. If one of the following hold:
\begin{enumerate}
\item $r=p-1$;
\item $r=p+1$;
\item $r=\frac{p+1}{2}$ (and $p \neq 2$);
\end{enumerate}
then the group $\Delta_{r/p}$ equals $\bGamma_1^{(p)}(r)$. In particular, $\Delta_{r/p}$ has index $J_2(r)$ in $\SLP{p}$. 
\end{theorem}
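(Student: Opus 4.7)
My plan is to invoke Corollary~\ref{Cor:r-main-generators-artin}: under Conjecture~\ref{Conj:strong-artin} for $p$, the subgroup $\bGamma_1^{(p)}(r)$ is generated by $A$, $Q_{r/p}$, and $U_p^{k\sigma_p(r)}$ for any nonzero $k \in \Z$. Because the inclusion $\Delta_{r/p} = \langle A, Q_{r/p}\rangle \subseteq \bGamma_1^{(p)}(r)$ is trivial, the task reduces to producing some nonzero power of $U_p^{\sigma_p(r)}$ inside $\Delta_{r/p}$. Once this reduction is in place, the rest of the argument will be arithmetic and unconditional on any form of Artin's conjecture.

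Next, I would compute $\sigma_p(r)$ in each of the three cases. For $r = p-1$, the congruence $p \equiv 1 \pmod{r}$ gives $\sigma_p(r) = 1$. For $r = p+1$, the congruence $p \equiv -1 \pmod{r}$ together with $r > 2$ gives $\sigma_p(r) = 2$. For $r = (p+1)/2$ with $p > 3$, writing $p = 2r - 1 \equiv -1 \pmod{r}$ combined with $r > 2$ likewise gives $\sigma_p(r) = 2$; the border case $p = 3$, $r = 2$ is already handled by Proposition~\ref{Cor:small-good-denominators12345}. Hence the concrete goal becomes to exhibit $U_p \in \Delta_{(p-1)/p}$ and $\pm U_p \in \Delta_{r/p}$ (whose square is $U_p^{\sigma_p(r)}$) in the remaining two cases.

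For the search I would use the four-syllable ansatz $W = Q_{r/p}^{a}\, A^{b}\, Q_{r/p}^{c}\, A^{d} $ suggested by the pattern of Proposition~\ref{Cor:small-good-denominators12345}. Matching the entries of $W$ against a diagonal matrix with $\pm p$ and $\pm 1/p$ on the diagonal yields a small linear Diophantine system in $a,b,c,d$ that solves unambiguously; I expect the resulting identities to be
\begin{align*}
&r = p - 1: & U_p &= Q_{(p-1)/p}^{\,p}\, A\, Q_{(p-1)/p}^{-1}\, A^{-p}, \\
&r = p + 1: & -U_p &= Q_{(p+1)/p}^{-p}\, A\, Q_{(p+1)/p}^{-1}\, A^{p}, \\
&r = \tfrac{p+1}{2}: & -U_p &= Q_{(p+1)/(2p)}^{-p}\, A^{2}\, Q_{(p+1)/(2p)}^{-1}\, A^{2p}.
\end{align*}
Each identity can be verified by a direct multiplication: the off-diagonal cancellations consume precisely the defining relations $p \cdot \tfrac{p-1}{p} = p-1$, $p \cdot \tfrac{p+1}{p} = p+1$, and $p \cdot \tfrac{p+1}{2p} = \tfrac{p+1}{2}$, while the integrality of the exponents $p$, $2p$, $\pm 1$, $\pm 2$ is automatic.

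The main obstacle I anticipate is not computational but conceptual: one must ensure the power of $U_p$ obtained really lands at an exponent divisible by $\sigma_p(r)$, since only then does Corollary~\ref{Cor:r-main-generators-artin} apply. The three identities above deliver exactly $U_p$ or $-U_p$, so squaring the second and third (to absorb the sign) yields $U_p^2 = U_p^{\sigma_p(r)}$ in those cases, while the first already provides $U_p = U_p^{\sigma_p(r)}$. Combined with the reduction of the first paragraph, this gives $\Delta_{r/p} = \bGamma_1^{(p)}(r)$ in all three cases, and the index statement $[\SLP{p} : \Delta_{r/p}] = J_2(r)$ follows immediately from Corollary~\ref{Cor:index-formula} together with the index computation of $\bGamma_1^{(p)}(r)$ in \S\ref{Sec:SL2}.
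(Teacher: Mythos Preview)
Your proof is correct and follows essentially the same approach as the paper: reduce via Corollary~\ref{Cor:r-main-generators-artin} to exhibiting a nonzero power of $U_p^{\sigma_p(r)}$ inside $\Delta_{r/p}$, then verify explicit four-syllable identities in $A$ and $Q_{r/p}$. Your identity $-U_p = Q_{(p+1)/p}^{-p}\,A\,Q_{(p+1)/p}^{-1}\,A^{p}$ for the case $r=p+1$ is in fact shorter than the paper's eight-syllable expression $U_p^2 = Q^{p^3-p^2-p} A Q^{-1} A^p Q A^{-1} Q A^{-p}$, and your identity for $r=(p+1)/2$ differs from (but is equivalent to) the paper's $-U_p = Q^{-2p} A\, Q^{-2} A^{p}$.
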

\begin{proof}
Throughout this proof, we will let $Q = Q_{r/p}$, where the $r$ is always as given in the individual subcases. 

If $r=p-1$, then since $\sigma_p(p-1) = 1$ it suffices, in view of Corollary~\ref{Cor:r-main-generators-artin}, to verify that $U_p \in \Delta_{(p-1)/p}$. But this is easy to verify: $U_p =  Q^p A Q^{-1} A^{-p}$.

If $r=p+1$, then since $\sigma_p(p+1) = 2$, it suffices to verify that $U_p^2 \in \Delta_{r/p}$. This is also easy to verify, although the length of the shortest product that we have been able to find which verifies this grows cubically in $p$ (the syllable length is fixed):
\[
U_p^2 = \begin{pmatrix}
p^2 & 0 \\ 0 & 1/p^2
\end{pmatrix} = Q^{p^3-p^2-p} A Q^{-1} A^p Q A^{-1} Q A^{-p}.
\]

Finally, if $r = \frac{p+1}{2}$, we first single out the case $p=3$. In this case, $\sigma_p(r) = \sigma_3(2) = 1$, and $U_p = Q^{-3}A^{-1}QA^3$. Suppose now that $p > 3$. Then $\sigma_p(r) = 2$, so we must prove that $U_p^2 \in \Delta_{r/p}$. Since, as one can verify, $-U_p = Q^{-2p}A Q^{-2} A^p$, it follows that $U_p^2 \in \Delta_{r/p}$.
\end{proof}

The rational numbers appearing in Theorem~\ref{Thm:easy-numerators-p+1-p-1} are well-known to be relation numbers, and Theorem~\ref{Thm:easy-numerators-p+1-p-1} also (conditionally) recovers this result.

\subsection{Strong relation numbers}

We will now give another corollary of the methods of this article. As mentioned in \S\ref{Sec:Delta-intro}, to prove that $\Delta_{r/p}$ is not free it suffices to find \textit{some} upper (or lower) triangular matrix as a non-trivial product of $A$ and $Q_{r/p}$. We now introduce a new condition on $r/p$ which strengthens this (although, as we shall conjecture, this is not actually a strengthening). Let $0 < r/p < 4$ with $\gcd(r,p)=1$, i.e.\ let $r/p$ be admissible. Then we say that $r/p$ is a \textit{strong} relation number if there exists some non-trivial product in $A$ and $Q_{r/p}$ which equals an upper (or lower) triangular matrix in which the diagonal elements are not $1$. For example, the number $3/2$ is a strong relation number, as 
\[
A^2 Q^{-1}_{3/2}  A = \begin{pmatrix}
-1/2 & -3/2 \\ 0 & -2
\end{pmatrix}.
\]
To have $r/p$ be a strong relation number is a necessary and sufficient condition for $\Delta_{r/p}$ having finite index in $\SLP{p}$. More precisely, we have:

\begin{theorem}\label{Thm:Delta-conditions-for-finite-index}
 Let $p$ be prime and $\gcd(p,r)=1$. Assume the strengthened form of Artin's conjecture on primitive roots (Conjecture~\ref{Conj:strong-artin}) holds for $p$. Then the following are equivalent: 
\begin{enumerate}
\item $r/p$ is a strong relation number;
\item $U_p^{\sigma_p(r)} \in \Delta_{r/p}$;
\item $\Delta_{r/p} = \bGamma_1^{(p)}(r)$;
\item $[\SLP{p} : \Delta_{r/p}]=J_2(r)$.
\end{enumerate}
\end{theorem}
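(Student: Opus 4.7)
The plan is to establish the equivalences by proving the cycle $(1)\Rightarrow(3)\Rightarrow(2)\Rightarrow(3)$, together with the easy observations $(3)\Leftrightarrow(4)$ and $(3)\Rightarrow(1)$. The trivial implications are immediate: $(3)\Rightarrow(4)$ from the index formula $[\SLP{p}:\bGamma_1^{(p)}(r)]=J_2(r)$; $(4)\Rightarrow(3)$ since $\Delta_{r/p}\leq\bGamma_1^{(p)}(r)$ is a containment of finite-index subgroups of $\SLP{p}$ with equal index; $(3)\Rightarrow(2)$ since $U_p^{\sigma_p(r)}\in\bGamma_1^{(p)}(r)$ by definition of $\sigma_p(r)$; $(3)\Rightarrow(1)$ since the diagonal matrix $U_p^{\sigma_p(r)}$ has entries $p^{\pm\sigma_p(r)}\neq 1$; and $(2)\Rightarrow(3)$ is immediate from Corollary~\ref{Cor:r-main-generators-artin}, which under Conjecture~\ref{Conj:strong-artin} gives $\bGamma_1^{(p)}(r)=\langle A,Q_{r/p},U_p^{\sigma_p(r)}\rangle\leq\Delta_{r/p}$.

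The substantive content lies in $(1)\Rightarrow(3)$. Let $W\in\Delta_{r/p}$ witness~(1); without loss of generality $W$ is upper triangular, say $W=\begin{pmatrix}\varepsilon p^k & b \\ 0 & \varepsilon p^{-k}\end{pmatrix}$ with $\varepsilon\in\{\pm 1\}$ and $(\varepsilon,k)\neq(+1,0)$ (the lower triangular case is entirely symmetric, swapping the roles of $Q_{r/p}$ and $A$). The key step is the conjugation identity, valid for every $n\in\Z$:
\[
W^n Q_{r/p} W^{-n} \;=\; \begin{pmatrix} 1 & r\,p^{2kn-1} \\ 0 & 1 \end{pmatrix} \;\in\; \Delta_{r/p}.
\]
These matrices all sit inside the upper unitriangular subgroup $N\leq\SLP{p}$, which via the upper-right entry is isomorphic to $(\Z[1/p],+)$. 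For $k\neq 0$, the set $\{p^{2kn}:n\in\Z\}$ generates $\Z[1/p]$ as a $\Z$-module (a short check using $p^{2k-1}\cdot p^{-2k}=p^{-1}$), so the $\Z$-linear span of $\{r\,p^{2kn-1}:n\in\Z\}$ equals all of $r\Z[1/p]$. Hence $\Delta_{r/p}\cap N = \bGamma_1^{(p)}(r)\cap N$. Writing $W=d\cdot T$ with $d=\varepsilon U_p^k$ diagonal and $T=\begin{pmatrix}1 & \varepsilon b p^{-k}\\ 0 & 1\end{pmatrix}\in N$, we have $\varepsilon b p^{-k}\in r\Z[1/p]$ (since $b\in r\Z[1/p]$), so $T\in\Delta_{r/p}$ and therefore $d=WT^{-1}=\varepsilon U_p^k\in\Delta_{r/p}$. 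Squaring gives $U_p^{2k}\in\Delta_{r/p}$; as $U_p^{2k}\in\bGamma_1^{(p)}(r)$ forces $\sigma_p(r)\mid 2k$, Corollary~\ref{Cor:r-main-generators-artin} then yields $\bGamma_1^{(p)}(r)=\langle A,Q_{r/p},U_p^{2k}\rangle\leq\Delta_{r/p}$, proving~(3).

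The only remaining case is $k=0,\varepsilon=-1$, where $W$ has diagonal $-I$. Membership in $\bGamma_1^{(p)}(r)$ forces $-1\equiv 1\pmod r$, hence $r\in\{1,2\}$, and in these two cases condition~(3) is already known unconditionally from Proposition~\ref{Prop:r=1} and Proposition~\ref{Cor:small-good-denominators12345}. The main obstacle is the conjugation trick in the case $k\neq 0$: the identity $W^n Q_{r/p} W^{-n}=Q_{r/p}^{p^{2kn}}$, together with the fact that the resulting exponents $2kn-1$ are unbounded above and below, is what allows us to reconstruct the full upper unitriangular part of $\bGamma_1^{(p)}(r)$ inside $\Delta_{r/p}$ from $W$ and $Q_{r/p}$ alone, and then peel off the upper-unitriangular factor of $W$ to extract a non-trivial power of $U_p$.
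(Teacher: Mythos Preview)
Your proof is correct and follows the paper's strategy: from condition~(1) extract a non-trivial power of $U_p$ lying in $\Delta_{r/p}$, then invoke Corollary~\ref{Cor:r-main-generators-artin} (equivalently Theorem~\ref{Thm:r-finite-index-number-theory-Artin}) to obtain~(3), with the remaining implications being formal.

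Your argument for $(1)\Rightarrow(3)$ is in fact more careful than the paper's. The paper asserts in one line that $Q_{r/p} M Q_{r/p}^{-1} = U_p^k$ for $M=\begin{psmallmatrix}p^k & b\\ 0 & p^{-k}\end{psmallmatrix}\in\Delta_{r/p}$, but this identity is not literally true for general $b$: conjugation by $Q_{r/p}$ preserves the diagonal of $M$ but does not annihilate the off-diagonal entry. Your route---conjugating $Q_{r/p}$ by powers of $W$ to realise the full unipotent part $\begin{psmallmatrix}1 & r\Z[1/p]\\ 0 & 1\end{psmallmatrix}$ of $\bGamma_1^{(p)}(r)$ inside $\Delta_{r/p}$, and then stripping the unipotent factor of $W$ to isolate $\pm U_p^k$---supplies exactly the missing step. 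The residual case $k=0$, $\varepsilon=-1$ (forcing $r\le 2$) is also handled correctly.
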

\begin{proof}
Let us suppose that $r/p$ is a strong relation number, i.e.\ that
\[
M = \begin{pmatrix}
p^k & b \\ 0 & p^{-k}
\end{pmatrix}
\]
lies in $\Delta_{r/p}$ for some non-zero $k$ and $b \in \Z[\frac{1}{p}]$. Then $Q_{r/p} M Q_{r/p}^{-1} = U_p^k$. Since $\Delta_{r/p} \leq \bGamma_1^{(p)}(r)$, it follows that $k$ must be a multiple of the multiplicative order of $p$ $\pmod{r}$. By Corollary~\ref{Cor:r-main-generators-artin}, the result follows. 
\end{proof}

We mention one of the curious consequences of the (still conjectural, in general) equality $\Delta_{r/p} = \bGamma_1^{(p)}(r)$. This is the fact that it implies that $\Delta_{r/p^k} = \Delta_{r/p}$ for all $k \geq 1$, as the generator $Q_{r/p^k}$ is an element of $\bGamma_1^{(p)}(r)$. This can be observed in many cases without any theory; for example, $\Delta_{7/3} = \Delta_{7/3^k}$, as one checks that 
\[
Q_{7/3^k} = \begin{pmatrix}
1 & 7/3^k \\ 0 & 1
\end{pmatrix} = A^{3^k} Q_{7/3}^{-1}A^{3^{k-1}}Q_{7/3}^{-1}A^{3^k}
\]
for all $k \geq 1$. Finding such equalities is very difficult in general, as it involves finding a non-trivial product of $A$ and $Q_{r/p}$ equalling an upper triangular matrix. Nevertheless, it provides targets for \textit{which} upper triangular matrices to generate.

\begin{example}
Assume Conjecture~\ref{Conj:strong-artin} holds. We will prove that $\Delta_{8/3}$ has finite index in $\SLP{3}$ by proving that $8/3$ is a strong relation number. This is direct: 
\[
A^6 Q_{8/3}^{-1}AQ_{8/3}^{-1}A = \begin{pmatrix}
1/9 & 16/9 \\ 0 & 9
\end{pmatrix}.
\]
Hence $8/3$ is a strong relation number, so by Theorem~\ref{Thm:Delta-conditions-for-finite-index} we have that $\Delta_{8/3}$ has index $J_2(8) = 48$ in $\SLP{3}$. This can be verified computationally, see Table~\ref{Tab:cmputational}.
\end{example}

Finally, we list the conjectures about $\Delta_{r/p}$ made (explicitly or implicitly) in this article. Recall that an \textit{admissible} rational $r/p$ is one where $0 \leq r/p \leq 4$. 

\begin{conjecture*}
For all admissible $r, p \in \Z$ with $\gcd(r,p)=1$, the following hold:
\begin{enumerate}[label=\textnormal{(\Roman*)}]
\item $r/p$ is a relation number.
\item $\Delta_{r/p}$ has finite index in $\SLP{p}$ (and is a congruence subgroup).
\item $\Delta_{r/p} = \bGamma_1^{(p)}(r)$, and $[\SLP{p} : \Delta_{r/p}] = J_2(r)$.
\item $U_p^{\sigma_p(r)} \in \Delta_{r/p}$, where $\sigma_p(r)$ is the multiplicative order of $p$ mod $r$.
\item $r/p$ is a strong relation number.
\item $\Delta_{r/p} = \Delta_{r/p^k}$ for all $k \geq 1$. 
\item $\SLP{p} = \Delta_{r/p} \cdot \SL_2(\Z)$. 
\end{enumerate}
\end{conjecture*}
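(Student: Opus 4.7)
The conjecture bundles seven statements, so my first move is to organize them by implication and identify the minimal piece to attack. Conditional on Conjecture~\ref{Conj:strong-artin}, Theorem~\ref{Thm:Delta-conditions-for-finite-index} already gives (II) $\Leftrightarrow$ (III) $\Leftrightarrow$ (IV) $\Leftrightarrow$ (V). Statement (III) immediately implies (I), since $\bGamma_1^{(p)}(r)$ has finite index in $\SLP{p}$ and the latter is not virtually free; it yields (VII) because every congruence subgroup is coset-integral (Lemma~\ref{Lem:weak-congruence-congruence-iff-coset-integral}); and it yields (VI) once (III) also holds for $r/p^k$, since then both $\Delta_{r/p}$ and $\Delta_{r/p^k}$ coincide with $\bGamma_1^{(p)}(r)$ (noting $Q_{r/p^k} \in \bGamma_1^{(p)}(r)$ because $r/p^k \equiv 0 \pmod{r}$ in $\Z[1/p]/r\Z[1/p]$). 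So the whole conjecture collapses, conditionally, to the single membership statement (IV): for every admissible $r/p$, some nonzero power $U_p^{k\sigma_p(r)}$ can be written as a word in $A$ and $Q_{r/p}$.

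Second, I would attack (IV) by producing explicit words, using the pattern already visible in Proposition~\ref{Cor:small-good-denominators12345} and Theorem~\ref{Thm:easy-numerators-p+1-p-1}. In all worked cases the target word has the shape $Q^{b_0} A^{a_1} Q^{b_1} \cdots A^{a_n} Q^{b_n}$, with exponents determined by a Euclidean-type algorithm on the pair $(p^{2k}, r)$ aimed at clearing the lower left entry. The natural plan is to run an induction on the continued fraction expansion of $r/p$ (or of $p^{2k}/r$), maintaining as an invariant that the partial product has the form $\bigl(\begin{smallmatrix} \alpha & \beta \\ \gamma & \delta \end{smallmatrix}\bigr)$ with controlled $p$-adic valuations on the off-diagonal entries, and terminating when the lower left entry is $0$ and the diagonal equals $p^{\pm k\sigma_p(r)}$. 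For each residue class of $p$ modulo small integers one would have to check a base case by hand, much as in Proposition~\ref{Prop:r=5-for-p-not-1}, and then fold the resulting families together.

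The main obstacle is exactly what makes the freeness problem hard. As noted in \S\ref{Sec:Delta-intro}, the shortest word realizing any relation grows very rapidly as $r/p \to 4$, and no uniform construction is known; producing the specific upper triangular relation demanded by (IV) is strictly harder than just producing some relation. A single continued-fraction induction will almost certainly not suffice near $r/p = 4$, because the denominator $p^{2k}$ one can reach by conjugation grows only exponentially in the word length while the required $k$ seems to grow faster. Structurally, one could instead try to exploit the Serre decomposition \eqref{Eq:amalgam-decomp-SL2Zp} of $\SLP{p}$ and study the action of $\Delta_{r/p}$ on the Bass-Serre tree, aiming to show the quotient graph is finite by controlling which $\SL_2(\Z)$-cosets are visited; but this replaces a word-length problem with an equally hard fundamental-domain problem.

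For this reason I would regard a full proof of the conjecture as premature, and would target a realistic intermediate result instead: extend \S\ref{Sec:final-section} to show that the set of admissible $r/p$ satisfying (IV) is dense in $(0,4)$, conditional on Conjecture~\ref{Conj:strong-artin}. The natural families to try are $r = ap \pm b$ for fixed small $b$, where $r/p$ has a short continued fraction, and the Pell-type sequences accumulating at $2+\sqrt 2$ described in \eqref{Eq:convs-to-2}, where the syllable pattern $Q^{a(n)}(A^{-1}Q)^3 A^{-1} Q^{a(n)}$ is already known to produce upper triangular matrices; one would then need to strengthen that to upper triangular matrices whose diagonal is a nontrivial power of $p$. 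Even partial density results here would, via Theorem~\ref{Thm:Delta-conditions-for-finite-index}, simultaneously settle (I)-(VII) on a dense set of admissible parameters, which is the best I can currently envisage without a genuinely new idea.
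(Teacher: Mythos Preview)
The statement you were asked to address is explicitly listed in the paper as a \emph{conjecture}, not as a theorem: the paper does not prove it, and indeed frames it as the central open problem of the article. So there is no ``paper's own proof'' to compare against. Your proposal correctly recognizes this, declines to claim a proof, and instead does something appropriate: it organizes the seven items by implication, isolates the core unproved statement, explains the obstruction, and proposes a realistic intermediate target. That is the right stance.

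A few comments on the details of your reduction. Your claim that, conditional on Conjecture~\ref{Conj:strong-artin}, items (II)--(V) are equivalent is essentially what Theorem~\ref{Thm:Delta-conditions-for-finite-index} says, together with the easy observation that finite index forces some nonzero power of $U_p$ into $\Delta_{r/p}$ by pigeonhole, which is exactly the strong-relation-number condition. Your derivations of (I) and (VII) from (III) are also correct and match the paper's own remarks. For (VI), however, you took an unnecessary detour: you do not need (III) to hold separately for $r/p^k$ (which is not even ``admissible'' in the paper's sense, since $p^k$ is not prime for $k\geq 2$). The inclusion $\Delta_{r/p}\subseteq\Delta_{r/p^k}$ is elementary, because $Q_{r/p}=Q_{r/p^k}^{\,p^{k-1}}$; the reverse inclusion follows from (III) for $r/p$ alone, since $Q_{r/p^k}\in\bGamma_1^{(p)}(r)=\Delta_{r/p}$. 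This is exactly the argument the paper sketches in the paragraph preceding the $\Delta_{7/3}$ example.

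Your assessment of the difficulty is accurate and aligns with the paper's: the bottleneck is producing an explicit word in $A,Q_{r/p}$ equal to a nontrivial power of $U_p$, and the known constructions (Propositions~\ref{Cor:small-good-denominators12345}, \ref{Prop:r=5-for-p-not-1}, Theorem~\ref{Thm:easy-numerators-p+1-p-1}) all rely on ad hoc short words whose length blows up as $r/p\to 4$. Your proposed intermediate goal---proving density of strong relation numbers in $(0,4)$---is a sensible next step and is not claimed anywhere in the paper, so pursuing the continued-fraction and Pell-type families you mention would be genuine progress rather than reproving known results.
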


 With the exception of (I), which appears implicitly in Lyndon \& Ullman \cite{Lyndon1969} and explicitly in Kim \& Koberda \cite{Kim2022}, and (II) which appears in \cite{Brody2023}, all the conjectures are original to this article. I.\ Smilga (personal communication) has informed the author that he has a proof of Conjecture~VII, which will appear in forthcoming work.

\begin{remark}
Let us make some remarks on the group generated by the two matrices
\[
X_\mu = \begin{pmatrix}
1 & \mu \\ 0 & 1
\end{pmatrix}, \quad \text{and} \quad Y_\mu = \begin{pmatrix}
1 & 0 \\ \mu & 1
\end{pmatrix}.
\]
Let us write $\Diamond_\mu = \langle X_\mu, Y_{\mu} \rangle$.
Chang, Jennings \& Ree \cite{Chang1958} proved that $\Diamond_\mu \cong \Delta_{\mu^2}$. This isomorphism is not, however, an equality as subsets of $\SL_2(\C)$. If $\mu = r/p$, then similar reasoning as in this article about $\Diamond_{r/p}$ should give similar conjectures as for $\Delta_{r/p}$. For example, it seems as if 
\[
[\SLP{p} : \Diamond_{r/p}] = J_2(r^2)
\]
and that $\Diamond_{r/p}$ consists of the preimage of the matrices in $\SL_2(\Z / r^2 \Z)$ whose diagonal entries are congruent to $1$ mod $r$ (not $r^2$), and whose off-diagonal elements are congruent to $0$ mod $r$. Of course, there are $r^2$ such matrices.

Lyndon \& Ullman \cite{Lyndon1969} in fact seem to have been close to this fact. On \cite[p. 1402]{Lyndon1969}, they consider the group $\Diamond_{3/2}$, and mention that the group is contained in the principal congruence subgroup $\Gamma^{(2)}(3)$, which they denote $\mathbf{U}(3)$, in $\SLP{2}$, but that it does not appear to coincide with it. This is correct; however, on the next line, they write that the group $\Diamond_{3/2}$ contains $\begin{psmallmatrix}
8 & 0 \\ 0 & 1/8
\end{psmallmatrix}$. This is not correct. Indeed, it contradicts the preceding statement: $8 \equiv 2 \pmod{3}$, so $\begin{psmallmatrix}
8 & 0 \\ 0 & 1/8
\end{psmallmatrix}$ cannot possibly be an element of $\Gamma^{(2)}(3)$. On the other hand, $\begin{psmallmatrix}
-8 & 0 \\ 0 & -1/8
\end{psmallmatrix} \in \Gamma^{(2)}(3)$ is correct, and indeed it is not hard to verify that
\[
\begin{pmatrix}
-8 & 0 \\ 0 & -1/8
\end{pmatrix} = X_{3/2}^{12} (Y_{3/2} X_{3/2}^{-1})^2 X_{3/2}^{-1} Y_{3/2}^{-2} \in \Diamond_{3/2},
\]
so it is clear that a minus sign is simply missing from the assertion in Lyndon \& Ullman. It is also not difficult to prove Lyndon \& Ullman's assertion that $\Diamond_{3/2}$ is a proper subgroup of $\Gamma^{(2)}(3)$, and indeed $\Gamma^{(2)}(3) : \Diamond_{3/2}] = 3$. Representatives for these three cosets of $\Diamond_{3/2}$ in $\Gamma^{(2)}(3)$ are easy to construct: they can be taken as $1, \begin{psmallmatrix}
4 & 0 \\ 0 & 1/4
\end{psmallmatrix}$, and $\begin{psmallmatrix}
16 & 0 \\ 0 & 1/16
\end{psmallmatrix}$.
We do not develop this theory further here, but doing this, one is led to entirely analogous conjectures as for $\Delta_{r/p}$. 
\end{remark}

\bibliographystyle{amsalpha}
\bibliography{parabolic-freeness-jan-2024.bib}

 \end{document}